\newtheorem{theorem}{Theorem}[section]
\newtheorem{lemma}[theorem]{Lemma}
\theoremstyle{definition}
\newtheorem{question}[theorem]{Question}
\def \kbar {\overline{k}}
\def \Qbar {\overline{\mathbb{Q}}}
\def \O {\mathcal{O}}
\DeclareMathOperator{\Sym}{Sym}
\DeclareMathOperator{\Supp}{Supp}
\DeclareMathOperator{\im}{im}
\DeclareMathOperator{\End}{End}
\DeclareMathOperator{\codim}{codim}
\begin{document}
\bibliographystyle{amsplain}
\title{Wirsing-type inequalities}
\author{Aaron Levin}
\address{Department of Mathematics\\Michigan State University\\East Lansing, MI 48824}
\curraddr{}
\email{adlevin@math.msu.edu}

\begin{abstract}
Wirsing's theorem on approximating algebraic numbers by algebraic numbers of bounded degree is a generalization of Roth's theorem in Diophantine approximation.  We study variations of Wirsing's theorem where the inequality in the theorem is strengthened, but one excludes a certain easily-described special set of approximating algebraic points.
\end{abstract}

\maketitle

\section{Introduction}

Roth's fundamental result in Diophantine approximation describes how closely an algebraic number may be approximated by rational numbers:
\begin{theorem}[Roth \cite{Roth}]
\label{TRoth}
Let $\alpha\in \Qbar$ be an algebraic number.  Let $\epsilon>0$.  Then there are only finitely many rational numbers $\frac{p}{q}\in \mathbb{Q}$ satisfying
\begin{equation*}
\left|\alpha-\frac{p}{q}\right|<\frac{1}{q^{2+\epsilon}}.
\end{equation*}
\end{theorem}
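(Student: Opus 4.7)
The plan is to proceed by contradiction: assuming infinitely many exceptional rationals $p/q$ exist, I would fix a large integer $m$ and extract a subsequence $p_1/q_1, \ldots, p_m/q_m$ with denominators growing so rapidly that $q_{i+1}$ dwarfs any fixed polynomial in $q_i$. The strategy is then to produce a single real number that admits incompatible upper and lower bounds, the lower bound coming from the fact that a suitably normalized integer is nonzero, and the upper bound coming from the closeness of each $p_i/q_i$ to $\alpha$.

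The key device is an auxiliary polynomial $P(x_1, \ldots, x_m) \in \mathbb{Z}[x_1, \ldots, x_m]$ of multidegree $(r_1, \ldots, r_m)$, where the $r_i$ are chosen so that the products $r_i \log q_i$ are nearly equal. Via a Siegel's-lemma-style pigeonhole on the $\prod (r_i+1)$ coefficients, I would arrange that $P$ has integer coefficients of modest height while the \emph{index} of $P$ with respect to the weights $(1/r_1, \ldots, 1/r_m)$ at the point $(\alpha, \ldots, \alpha)$ is close to $m/2$. A Taylor expansion around $(\alpha, \ldots, \alpha)$, combined with the assumed estimate $|p_i/q_i - \alpha| < q_i^{-2-\epsilon}$, converts this high-order vanishing into the upper bound
\begin{equation*}
\left| P\!\left(\tfrac{p_1}{q_1}, \ldots, \tfrac{p_m}{q_m}\right) \right| \ll \prod_{i=1}^m q_i^{-(1+\epsilon/2) r_i}.
\end{equation*}
On the other hand, $q_1^{r_1} \cdots q_m^{r_m}\, P(p_1/q_1, \ldots, p_m/q_m)$ is an integer, so if it is nonzero its absolute value is at least $1$; the two bounds are incompatible for $m$ large enough in terms of $\epsilon$.

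The hard part, as usual, is showing that this integer is indeed nonzero, or more precisely that the index of $P$ at the rational point $(p_1/q_1, \ldots, p_m/q_m)$ remains strictly smaller than $m/2 - \epsilon$. This is \emph{Roth's lemma}, a delicate statement proved by induction on the number of variables $m$, which uses a generalized Wronskian to detect when a polynomial almost factors and crucially exploits the rapid growth $q_{i+1} \gg q_i^{N}$ for $N$ increasing with $m$. Calibrating the parameters $r_i$, $m$, and the gap between successive $q_i$ so that Roth's lemma is applicable \emph{and} the resulting gap between the upper and lower bounds is genuine is the technical heart of the proof; everything else is bookkeeping around this obstruction.
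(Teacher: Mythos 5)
The paper does not prove this statement: it is Roth's theorem itself, quoted as a classical result with a citation to Roth's original article, so there is no in-paper argument to compare against. Your outline is an accurate summary of the standard (essentially Roth's original) proof: the auxiliary polynomial in $m$ variables with nearly equal $r_i\log q_i$, the Siegel-lemma construction forcing index close to $m/2$ at $(\alpha,\ldots,\alpha)$, the Taylor-expansion upper bound against the trivial lower bound $\left|q_1^{r_1}\cdots q_m^{r_m}P(p_1/q_1,\ldots,p_m/q_m)\right|\geq 1$, and Roth's lemma (via generalized Wronskians and the rapid growth of the $q_i$) as the non-vanishing input. Be aware, though, that what you have written is a roadmap rather than a proof: the two genuinely hard steps --- the index bound in the Siegel-lemma construction and Roth's lemma itself, together with the calibration of $m$, the $r_i$, and the gap condition $q_{i+1}\gg q_i^{N}$ so that both halves of the argument apply simultaneously --- are named and correctly located but not carried out. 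You should also record the trivial reduction to $\alpha$ irrational (for rational $\alpha$ the bound $\left|\alpha-\tfrac{p}{q}\right|\geq c/q$ disposes of the claim immediately), since the auxiliary-polynomial machinery presupposes $[\mathbb{Q}(\alpha):\mathbb{Q}]\geq 2$.
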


Roth's theorem can be extended \cite{Lan, Rid} to an arbitrary fixed number field $k$ (in place of $\mathbb{Q}$) and to allow finite sets of absolute values (including non-archimedean ones).  A general statement of Roth's theorem, using the language of heights (see Section \ref{sDio} for the definitions), is the following.

\begin{theorem}
\label{tRoth}
Let $S$ be a finite set of places of a number field $k$.  Let $P_1,\ldots, P_q\in \mathbb{P}^1(k)$ be distinct points, $D=\sum_{i=1}^q P_i$, and $\epsilon>0$.  Then for all but finitely many points $P\in \mathbb{P}^1(k)\setminus \Supp D$,
\begin{equation*}
m_{D,S}(P)=\sum_{i=1}^q\sum_{v\in S}h_{P_i,v}(P)< (2+\epsilon)h(P).
\end{equation*}
\end{theorem}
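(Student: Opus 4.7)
The plan is to reduce the multi-point inequality in Theorem \ref{tRoth} to the single-point Roth theorem over a number field with a finite set of places, as given by the Lang--Ridout extension.

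First I would exploit the fact that at each place $v\in S$, a point $P\in \mathbb{P}^1(k)$ can be $v$-adically close to at most one of the distinct points $P_1,\ldots,P_q$. Quantitatively, a standard triangle inequality argument with local height (Weil) functions yields a constant $C_v$, depending only on $v$ and the $P_i$, with
\begin{equation*}
\sum_{i=1}^q h_{P_i,v}(P) \leq \max_{1\leq i\leq q} h_{P_i,v}(P) + C_v
\end{equation*}
for all $P\in \mathbb{P}^1(k)\setminus \Supp D$. Summing over $v\in S$ gives
\begin{equation*}
m_{D,S}(P) \leq \sum_{v\in S} \max_{i} h_{P_i,v}(P) + C, \qquad C=\sum_{v\in S} C_v.
\end{equation*}

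Next I would stratify by the identity of the ``closest'' approximator at each place. For each function $\sigma\colon S\to \{1,\ldots,q\}$ (finitely many), let
\begin{equation*}
E_\sigma = \{P : h_{P_{\sigma(v)},v}(P) = \max_i h_{P_i,v}(P) \text{ for all } v\in S\}.
\end{equation*}
Every $P$ lies in some $E_\sigma$, and if $P\in E_\sigma$ violates $m_{D,S}(P)<(2+\epsilon)h(P)$ with $h(P)$ large enough to absorb $C$, then
\begin{equation*}
\sum_{v\in S} h_{P_{\sigma(v)},v}(P) \geq \left(2+\frac{\epsilon}{2}\right) h(P).
\end{equation*}
Now I would apply the Lang--Ridout form of Roth's theorem, which allows a different target point at each place: for any choice $(\alpha_v)_{v\in S}$ with $\alpha_v\in k$ and any $\delta>0$, only finitely many $P\in \mathbb{P}^1(k)$ satisfy $\sum_{v\in S} h_{\alpha_v,v}(P)\geq (2+\delta)h(P)$. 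Applied with $\alpha_v=P_{\sigma(v)}$ and $\delta=\epsilon/2$, this gives a finite exceptional set inside each $E_\sigma$; taking the union over the finitely many $\sigma$ and adjoining the finitely many points of bounded height absorbs the constant $C$ and completes the argument.

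The main obstacle is cleanly justifying the estimate in the first step: one must choose a consistent normalization of the local heights $h_{P_i,v}$ and verify that $v$-adic closeness to one $P_i$ forces a uniform lower bound on $v$-adic distance from the others. This is routine once the Weil-function framework is fixed, and it is the only place where the geometry of $\mathbb{P}^1$ (distinctness of the $P_i$) is used; the rest of the proof is bookkeeping plus the invocation of the classical result.
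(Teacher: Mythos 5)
Your argument is correct: the reduction of the multi-point sum to a single ``closest point per place'' via the fact that $\min\{h_{P_i,v}(P),h_{P_j,v}(P)\}=O(1)$ for $i\neq j$, the stratification over maps $\sigma\colon S\to\{1,\dots,q\}$, and the invocation of the several-targets (Lang--Ridout) form of Roth's theorem together give exactly the stated inequality, with Northcott handling the finitely many points of bounded height. The paper does not prove this theorem at all --- it cites it as the known extension of Roth's theorem due to Lang and Ridout --- and your derivation is the standard way that general statement is obtained from the classical one, so there is nothing to flag beyond the routine verification of the Weil-function estimate you already identify.
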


We note that there is no loss of generality in the assumption that $P_1,\ldots, P_q$ are $k$-rational (see \cite[Remark 2.2.3]{V2}).

Instead of taking the approximating elements from a fixed number field, a natural variation on Roth's theorem is to consider approximation by algebraic numbers of bounded degree.  In this direction, Wirsing \cite{Wir} proved a generalization of Roth's theorem, which we state in a general form.

\begin{theorem}[Wirsing]
\label{tWir}
Let $S$ be a finite set of places of a number field $k$.  Let $P_1,\ldots, P_q\in \mathbb{P}^1(k)$ be distinct points and let $D=\sum_{i=1}^q P_i$.  Let $\epsilon>0$ and let $d$ be a positive integer.  Then for all but finitely many points $P\in \mathbb{P}^1(\kbar)\setminus\Supp D$ satisfying $[k(P):k]\leq d$,
\begin{equation*}
m_{D,S}(P)< (2d+\epsilon)h(P).
\end{equation*}
\end{theorem}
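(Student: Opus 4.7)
The plan is to reduce Theorem~\ref{tWir} to Roth's theorem (Theorem~\ref{tRoth}) by replacing the algebraic point $P$ by its ``symmetrized'' image in the symmetric product $\Sym^d(\mathbb{P}^1) \cong \mathbb{P}^d$. Although $P$ ranges over varying extensions $k(P)/k$, its symmetrized image $\tilde P$ lives in the fixed variety $\mathbb{P}^d(k)$, which is precisely what allows the exceptional set to be uniform in the degree-$\le d$ approximants.

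\textbf{Symmetrization and height transfer.} For $P \in \mathbb{P}^1(\kbar)$ with $e := [k(P):k] \le d$ and Galois conjugates $P^{(1)},\ldots,P^{(e)}$ over $k$, fix an auxiliary $R \in \mathbb{P}^1(k)\setminus\Supp D$ and form the degree-$d$ monic polynomial
$$g_P(X) := (X-R)^{d-e}\prod_{j=1}^e (X-P^{(j)}) \in k[X],$$
whose coefficient vector gives $\tilde P \in \mathbb{P}^d(k)$. Mahler-measure estimates together with the norm identity $\prod_j \max(1,|P^{(j)}|_v) = \prod_{w \mid v} \max(1,|P|_w)^{[k(P)_w:k_v]}$ yield, uniformly in $P$,
$$h(\tilde P) = e\cdot h(P) + O(1).$$
For each $P_i \in \Supp D$, the evaluation-at-$P_i$ linear form cuts out a hyperplane $H_i \subset \mathbb{P}^d$ whose defining form sends $\tilde P$ to $g_P(P_i) = (P_i-R)^{d-e} N_{k(P)/k}(P_i-P)$. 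The analogous local-height computation then gives, uniformly in $P$,
$$m_{\tilde D, S}(\tilde P) = e \cdot m_{D, S}(P) + O(1), \qquad \tilde D := \sum_i H_i.$$

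\textbf{Roth on $\mathbb{P}^d$ with constant $2d$.} It thus suffices to establish the following $\mathbb{P}^d$-statement: for all but finitely many $\tilde P \in \mathbb{P}^d(k) \setminus \bigcup_i H_i$,
$$m_{\tilde D, S}(\tilde P) < (2d + \epsilon)\, h(\tilde P).$$
Since the $P_i$ are distinct, a Vandermonde determinant shows any $d+1$ of the $H_i$'s are linearly independent, i.e., the $H_i$'s are in general position. For each pair $\{i,j\}$, the linear projection $\pi_{ij}\colon \mathbb{P}^d \dashrightarrow \mathbb{P}^1$ from the codimension-$2$ center $H_i\cap H_j$ sends $H_i, H_j$ to two distinct $k$-points of $\mathbb{P}^1$, and Theorem~\ref{tRoth} applied to $\pi_{ij}(\tilde P)$ yields
$$m_{H_i,S}(\tilde P) + m_{H_j,S}(\tilde P) < (2+\epsilon)\, h(\tilde P) + O(1).$$

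\textbf{Main obstacle.} The delicate step is combining these pairwise inequalities into a single global bound of $2d$. The idea is a pigeonhole exploiting general position: at each place $v \in S$ only a bounded number of $H_i$'s can contribute nontrivially to $\sum_i \lambda_{H_i,v}(\tilde P)$, and pairing these up and summing the projected-Roth inequalities should accumulate to $(2d + \epsilon)\, h(\tilde P)$. The technical difficulty is place-by-place bookkeeping: the effective pair selection varies with $v$, the $O(1)$ errors from the projections and height comparisons must be absorbed into the $\epsilon\cdot h(P)$ slack upon returning to $P$ through the identities of the previous step, and one must check that the exceptional set for the Roth application on each projected image compiles — uniformly over pairs and places — into a single finite exceptional set for $\tilde P \in \mathbb{P}^d(k)$. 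This uniformity, unavailable if one instead tried to apply Theorem~\ref{tRoth} directly in each extension $k(P)$, is exactly what the symmetric-product reduction provides.
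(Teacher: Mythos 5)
The paper does not actually prove Theorem~\ref{tWir}; it is quoted from Wirsing's paper. Your first two steps, however, are essentially the machinery the paper builds in Section~3 for its refinements: your map $P\mapsto\tilde P$ is the map $\psi$ into $\Sym^d\mathbb{P}^1\cong\mathbb{P}^d$, your height and proximity transfers are Lemma~\ref{proxlem} (with a sensible padding by $(X-R)^{d-e}$ to handle $e=[k(P):k]<d$), and your Vandermonde observation that the $H_i$ are in general position is Lemma~3.1. So the reduction to the $\mathbb{P}^d$-statement ``$\sum_i m_{H_i,S}(\tilde P)<(2d+\epsilon)h(\tilde P)$ for all but finitely many $\tilde P\in\mathbb{P}^d(k)$'' is sound, and that target statement is in fact true.

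The genuine gap is the endgame: pairwise projections to $\mathbb{P}^1$ plus Roth cannot prove it. Roth's theorem for \emph{two} points carries no content: for any divisor $P_1+P_2$ of degree $2$ on $\mathbb{P}^1$ one has $m_{P_1,S}(x)+m_{P_2,S}(x)\leq h_{P_1}(x)+h_{P_2}(x)+O(1)=2h(x)+O(1)$ trivially, with no exceptional set. Hence each of your inequalities $m_{H_i,S}(\tilde P)+m_{H_j,S}(\tilde P)<(2+\epsilon)h(\tilde P)+O(1)$ is weaker than the trivial bound $m_{H_i,S}(\tilde P)\leq h(\tilde P)+O(1)$, and no linear-programming combination of them can beat $\sum_i m_{H_i,S}(\tilde P)\leq qh(\tilde P)+O(1)$. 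Moreover you cannot repair this by projecting so that three or more $H_i$ become fibers: a linear projection $\mathbb{P}^d\dashrightarrow\mathbb{P}^1$ whose fibers contain $H_i$, $H_j$, $H_l$ would force these three hyperplanes to share a codimension-$2$ linear subspace, contradicting general position. Even granting the pairwise inequalities as constraints, the place-dependent pigeonhole does not close: if $|S|\geq 3$ and $q\geq 3d$, a point could a priori be $v_j$-adically close to $\bigcap_{i\in I_j}H_i$ for three disjoint $d$-element sets $I_j$, and the pairwise constraints only yield a bound near $3d\,h(\tilde P)$. What is actually needed on $\mathbb{P}^d$ is the Schmidt Subspace Theorem in the form of the Ru--Wong theorem: with the $H_i$ in general position ($m=n=d$) and $t=2d+\epsilon>d+1$, the exceptional set has dimension $\leq 2d+1-t<1$, hence is finite, which finishes the proof at once. (Alternatively, as in the paper's proof of Theorem~\ref{Wthm}, one applies Ru--Wong with a smaller $t$ and then disposes of the exceptional \emph{lines} $L$ by restricting the $H_i$ to $L$ and applying Roth with multiplicities, Theorem~\ref{Roth}, noting that each multiplicity is at most $d$ so that $c_1+c_2\leq 2d$.) Either way, some higher-dimensional input beyond Roth on $\mathbb{P}^1$ is unavoidable in this approach.
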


Taking $d=1$ in Wirsing's theorem recovers Roth's theorem.  For $t\leq 2d$ and $D$, $S$, $k$, as in Theorem \ref{tWir}, the set
\begin{equation}
\label{iset}
\{P\in\mathbb{P}^1(\kbar)\mid [k(P):k]=d, m_{D,S}(P)\geq th(P)\}
\end{equation}
may be infinite.  A natural way to obtain algebraic points $P\in\mathbb{P}^1(\kbar)$ with $[k(P):k]=d$ is to pull back $k$-rational points via a degree $d$ morphism $\phi:\mathbb{P}^1\to\mathbb{P}^1$.  The following result may be used to classify those morphisms $\phi$ which contribute infinitely many points in this way to the set \eqref{iset}.

\begin{theorem}
\label{tbor}
Let $S$ be a finite set of places of a number field $k$ containing the archimedean places.  Let $P_1,\ldots, P_q\in \mathbb{P}^1(k)$ be distinct points and let $D=\sum_{i=1}^q P_i$.  Let $\phi:\mathbb{P}^1\to\mathbb{P}^1$ be a morphism over $k$ of degree $d$.  Let $\phi(\{P_1,\ldots, P_q\})=\{Q_1,\ldots, Q_r\}$ and let
\begin{align*}
n_i=|\phi^{-1}(Q_i)\cap\{P_1,\ldots, P_q\}|, \quad i=1,\ldots, r.
\end{align*}
Rearrange the indices so that $n_1\geq n_2\geq \cdots \geq n_r$.
\begin{enumerate}
\item 
\label{p1}
Suppose that $|S|>1$.  For some constant $C$, the inequality
\begin{align*}
m_{D,S}(P)>(n_1+n_2)h(P)-C
\end{align*}
holds for infinitely many points $P\in \phi^{-1}(\mathbb{P}^1(k))$.
\item 
\label{p2}
Let $\epsilon>0$.  The inequality
\begin{align*}
m_{D,S}(P)<(n_1+n_2+\epsilon)h(P)
\end{align*}
holds for all but finitely many points $P\in \phi^{-1}(\mathbb{P}^1(k))$ with $[k(P):k]=d$.
\end{enumerate}
\end{theorem}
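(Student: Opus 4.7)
The plan is to handle parts (a) and (b) via a shared local-height ingredient. The crucial preliminary step is the functorial identity
\begin{align*}
h_{Q_i,v}(\phi(P)) = \sum_{P' \in \phi^{-1}(Q_i)} e_{P'}\, h_{P',v}(P) + O(1),
\end{align*}
where $e_{P'}$ is the ramification index of $\phi$ at $P'$, combined with the finer uniform estimate
\begin{align*}
h_{P',v}(P) = \frac{1}{d}\, h_{Q_i,v}(\phi(P)) + O(1)
\end{align*}
for each individual $P' \in \phi^{-1}(Q_i)$, valid whenever $[k(P):k]=d$. The reason is that the $d$ Galois conjugates of such a $P$ are exactly the preimages of $\phi(P)$; at a place $v$ where $\phi(P)$ is close to $Q_i$, precisely $e_{P'}$ of them cluster near each $P'$ in $\mathbb{P}^1(\kbar_v)$, and the local picture $y = x^{e_{P'}}$ of $\phi$ near $P'$ gives each such conjugate a local Weil contribution of order $h_{Q_i,v}(\phi(P))/e_{P'}$; after averaging over all $d$ conjugates the contributions equalize to $h_{Q_i,v}(\phi(P))/d$, independently of $P'$.

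For part (a), I would use $|S|>1$ to pick distinct $v_1,v_2\in S$ and, after relabeling so $n_1,n_2$ are attained at $Q_1,Q_2$, construct a sequence $Q_n\in\mathbb{P}^1(k)$ with $h(Q_n)\to\infty$, $Q_n \to Q_1$ at $v_1$, and $Q_n \to Q_2$ at $v_2$ satisfying $h_{Q_j,v_j}(Q_n)=h(Q_n)+o(h(Q_n))$ for $j=1,2$. Concretely, an isomorphism of $\mathbb{P}^1$ over $k$ sending $Q_1 \mapsto 0$, $Q_2 \mapsto \infty$ reduces this to producing $S$-units with $v_1$-valuation tending to $+\infty$ and $v_2$-valuation to $-\infty$, which exist in abundance when $|S|\geq 2$. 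Taking any preimage $P_n \in \phi^{-1}(Q_n)$ and passing to the generic subsequence where $[k(P_n):k]=d$, the individual local estimate applied at $v_1$ to each of the $n_1$ marked preimages of $Q_1$ gives a contribution of $h(P_n)+o(h(P_n))$ apiece (using $h(P_n)=h(Q_n)/d+O(1)$), and similarly at $v_2$ for the $n_2$ marked preimages of $Q_2$. Summing produces the lower bound $m_{D,S}(P_n) \geq (n_1+n_2)h(P_n) - C$.

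For part (b), for $P$ with $[k(P):k]=d$ and $Q=\phi(P)\in\mathbb{P}^1(k)$, summing the individual estimate over the $n_i$ marked preimages above $Q_i$ and then over $i$ and $v\in S$ yields
\begin{align*}
m_{D,S}(P) \leq \frac{1}{d}\sum_{i=1}^r n_i\, m_{Q_i,S}(Q) + O(1).
\end{align*}
Theorem \ref{tRoth} applied to $Q \in \mathbb{P}^1(k)$ and the divisor $\sum_i Q_i$ gives $\sum_i m_{Q_i,S}(Q) < (2+\delta)h(Q)$ outside a finite set, and trivially $m_{Q_i,S}(Q) \leq h(Q)+O(1)$ for each $i$. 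Since $n_1 \geq n_2 \geq \cdots$, an elementary linear-programming computation—maximize $\sum n_i x_i$ subject to $0 \leq x_i \leq h(Q)$ and $\sum x_i \leq (2+\delta)h(Q)$ is achieved by setting $x_1=x_2=h(Q)$ with residual $\delta h(Q)$—bounds the sum by $(n_1+n_2+\delta n_1)h(Q)+O(1)$. Dividing by $d$, using $h(Q)=dh(P)+O(1)$, and choosing $\delta<\epsilon/n_1$, yields $m_{D,S}(P)<(n_1+n_2+\epsilon)h(P)$ outside a finite set.

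The main technical obstacle I anticipate is establishing the uniform individual estimate $h_{P',v}(P)=h_{Q_i,v}(Q)/d+O(1)$: the functorial identity only controls the ramification-weighted sum, and promoting this to a statement about each $P'$ requires parallel tracking of the $\kbar_v$-adic distribution of conjugates of $P$ and the local Weil functions, using crucially the hypothesis $[k(P):k]=d$. Once this estimate is in hand, part (a) is a direct construction and part (b) is a short linear-programming reduction to Roth's theorem.
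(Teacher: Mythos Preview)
Your plan for part~\eqref{p2} coincides with the paper's: both reduce to the identity
\[
m_{P_i,S}(P)=\tfrac{1}{d}\,m_{Q_j,S}(\phi(P))+O(1)\qquad (\phi(P_i)=Q_j,\ [k(P):k]=d),
\]
sum to get $m_{D,S}(P)=\tfrac{1}{d}\sum_j n_j\,m_{Q_j,S}(\phi(P))+O(1)$, and feed this into Roth with weights (your linear-programming step is exactly Theorem~\ref{Roth}). The difference is that the paper bypasses the local $v$-adic analysis you single out as the main obstacle, using instead the symmetry $h_{A,v}(B)=h_{B,v}(A)$ of the explicit Weil function on~$\mathbb{P}^1$: if $P^{(1)},\dots,P^{(d)}$ are the conjugates of $P$, then
\[
m_{P_i,S}(P)=\tfrac{1}{d}\sum_j m_{P_i,S}(P^{(j)})=\tfrac{1}{d}\sum_j m_{P^{(j)},S}(P_i)=\tfrac{1}{d}\,m_{\phi^*\phi(P),S}(P_i)=\tfrac{1}{d}\,m_{\phi(P),S}(\phi(P_i))+O(1),
\]
and one more symmetry swap gives the claim. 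No tracking of $\kbar_v$-adic clustering of conjugates is needed.

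For part~\eqref{p1} there is a genuine gap. Your argument invokes the individual estimate, hence needs $[k(P_n):k]=d$, and you pass without justification to a ``generic subsequence'' of your $S$-unit sequence with irreducible fibre. That such a subsequence exists is true but not free: the locus of $Q$ with reducible fibre is thin, and you would need to argue (say via Hilbert irreducibility together with the fact that a rank~$\geq 1$ subgroup of $\mathbb{G}_m(k)$ is not thin) that your particular sequence meets the Hilbert set infinitely often. The paper avoids this entirely by giving a different argument for~\eqref{p1} that imposes no degree condition. Taking $R=\phi^{-1}(\O_{k,S}^*)$, functoriality yields $m_{\phi^*Q_1+\phi^*Q_2,S}(P)=2d\,h(P)+O(1)$ for every $P\in R$. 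Writing $\phi^*Q_1+\phi^*Q_2=\sum_Q e_Q\,Q$ with $\sum_Q e_Q=2d$, the trivial bound $m_{Q,S}(P)\leq h(P)+O(1)$ then forces $m_{Q,S}(P)=h(P)+O(1)$ for \emph{every} $Q\in\phi^{-1}(\{Q_1,Q_2\})$. Summing over the $n_1+n_2$ points $P_i$ lying in this set gives the lower bound directly.
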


After composing $\phi$ with an automorphism, we can always assume in Theorem~\ref{tbor} that $Q_1=0$ and $Q_2=\infty$.  Then Theorem \ref{tbor} motivates making the following definitions.  Let $k$ be a number field, $P_1,\ldots, P_q\in \mathbb{P}^1(k)$ be distinct points, and $D=\sum_{i=1}^qP_i$.  Let $d$ be a positive integer and let $t$ be a positive real number.  Let $\End_k(\mathbb{P}^1)$ be the set of $k$-morphisms $\phi:\mathbb{P}^1\to\mathbb{P}^1$.  Define
\begin{align*}
\Phi(D,d,t,k)&=\{\phi\in \End_k(\mathbb{P}^1)\mid \deg \phi\leq d, |\phi^{-1}(\{0,\infty\})\cap \Supp D|\geq t\},\\
Z(D,d,t,k)&=\bigcup_{\phi\in \Phi(D,d,t,k)} \phi^{-1}(\mathbb{P}^1(k)).
\end{align*}
It is then natural to ask the following question.

\begin{question}
\label{mq}
Does the inequality
\begin{equation}
\label{Wineq}
m_{D,S}(P)< th(P)
\end{equation}
hold for all but finitely many points $P\in \mathbb{P}^1(\kbar)\setminus Z(D,d,t,k)$ satisfying $[k(P):k]\leq d$?
\end{question}

We will show that Question \ref{mq} has a positive answer when $d=2$.
\begin{theorem}
\label{tQ}
Let $S$ be a finite set of places of a number field $k$.  Let $P_1,\ldots, P_q\in \mathbb{P}^1(k)$ be distinct points, let $D=\sum_{i=1}^qP_i$, and let $t$ be a positive real number.  Then the inequality
\begin{equation*}
m_{D,S}(P)< th(P)
\end{equation*}
holds for all but finitely many points $P\in \mathbb{P}^1(\kbar)\setminus Z(D,2,t,k)$ satisfying $[k(P):k]\leq 2$.
\end{theorem}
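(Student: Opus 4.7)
The plan is to transfer the problem via the symmetric square to the rational surface $\Sym^2 \mathbb{P}^1 \cong \mathbb{P}^2$, apply Schmidt's subspace theorem there, and then control the resulting exceptional set using Theorem~\ref{tbor}(\ref{p2}). Rational approximating points $P \in \mathbb{P}^1(k)$ are disposed of at the outset: Roth's theorem (Theorem~\ref{tRoth}) handles the range $t > 2$, while for $t \leq 2$ an automorphism of $\mathbb{P}^1$ over $k$ carrying two of the $P_i$ to $\{0, \infty\}$ exhibits $\mathbb{P}^1(k) \subseteq Z(D, 2, t, k)$ (assuming $q \geq 2$). The degenerate ranges $q \leq 1$, and more generally $t > q$, are handled by the trivial bound $m_{D, S}(P) \leq q\, h(P) + O(1)$ together with Northcott's theorem. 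So the substantive case is $P$ of degree exactly $2$ over $k$.

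To such a $P$ (with Galois conjugate $P^\sigma$) one associates $\tilde P = \{P, P^\sigma\} \in \Sym^2 \mathbb{P}^1(k) \cong \mathbb{P}^2(k)$, and to each $P_i$ the $k$-line $\ell_i \subset \mathbb{P}^2$ of unordered pairs containing $P_i$. The lines $\ell_1, \ldots, \ell_q$ are in general position in $\mathbb{P}^2$: any two meet in the single point $\{P_i, P_j\}$, and no three are concurrent (a common point would contain three distinct $P_i$). Functoriality of heights for the symmetrization $\pi \colon \mathbb{P}^1 \times \mathbb{P}^1 \to \mathbb{P}^2$, applied to $(P, P^\sigma)$, gives $h(\tilde P) = 2 h(P) + O(1)$ and $\sum_i m_{\ell_i, S}(\tilde P) = 2 m_{D, S}(P) + O(1)$. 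Schmidt's subspace theorem then yields a finite union of $k$-lines $M_1, \ldots, M_N \subset \mathbb{P}^2$ such that $m_{D, S}(P) < (3 + \epsilon) h(P)$ for all quadratic $P$ with $\tilde P \notin \bigcup_j M_j$, which already gives the required inequality when $t > 3$. When $t \leq 3$ and $q \geq 3$ one shows directly that every quadratic $P$ lies in $Z(D, 2, 3, k) \subseteq Z(D, 2, t, k)$: the unique involution $\iota$ of $\mathbb{P}^1$ swapping $\{P, P^\sigma\}$ and $\{P_1, P_2\}$ is defined over $k$ (because both pairs are $k$-rational) and descends to a degree-$2$ $k$-morphism $\phi$ with $\phi(P) \in \mathbb{P}^1(k)$; composing with a $k$-automorphism that sends $\phi(P_1)$ and $\phi(P_3)$ to $0$ and $\infty$ produces an element of $\Phi(D, 2, 3, k)$.

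It remains to treat quadratic $P$ with $\tilde P$ on some exceptional line $M = M_j$. Any $k$-line in $\Sym^2 \mathbb{P}^1$ parameterizes a pencil of effective degree-$2$ divisors on $\mathbb{P}^1$, and hence corresponds to a $k$-morphism $\phi_M \colon \mathbb{P}^1 \to \mathbb{P}^1$ of degree at most $2$; if the pencil had a base point $A$ (so $M = \ell_A$ for some $A \in \mathbb{P}^1(k)$), then $\tilde P \in M$ would force $A \in \{P, P^\sigma\}$, contradicting $[k(P) : k] = 2$. So $\phi_M$ has degree exactly $2$, and $\tilde P \in M(k)$ amounts to $\phi_M(P) = \phi_M(P^\sigma) \in \mathbb{P}^1(k)$, i.e., $P \in \phi_M^{-1}(\mathbb{P}^1(k))$. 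The hypothesis $P \notin Z(D, 2, t, k)$ says that no composition $\psi \circ \phi_M$ with $\psi \in \mathrm{PGL}_2(k)$ lies in $\Phi(D, 2, t, k)$; varying $\psi$ is the same as varying which pair of target points is labeled $\{0, \infty\}$, and so this translates to $n_1 + n_2 < t$ in the notation of Theorem~\ref{tbor} applied to $\phi_M$. Theorem~\ref{tbor}(\ref{p2}) then gives $m_{D, S}(P) < (n_1 + n_2 + \epsilon) h(P) < t\, h(P)$ for all but finitely many $P \in \phi_M^{-1}(\mathbb{P}^1(k))$ of degree $2$, and summing over the finitely many exceptional lines $M_j$ finishes the proof.

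The main technical work is the height and local-height comparison between $P$ and $\tilde P$ under the symmetrization map, together with the geometric identification of $k$-lines in $\Sym^2 \mathbb{P}^1$ with pencils of effective degree-$2$ divisors on $\mathbb{P}^1$. The crucial point is that exceptional lines corresponding to base-point pencils are automatically excluded (since $\tilde P$ is quadratic), so $\phi_M$ is always a genuine degree-$2$ morphism and Theorem~\ref{tbor}(\ref{p2}) applies on each $M_j$.
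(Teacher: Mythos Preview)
Your argument is correct and follows essentially the same route as the paper, which derives Theorem~\ref{tQ} as an immediate consequence of Theorem~\ref{Wthm} (the range $t>3$, proved via the symmetric-power map to $\mathbb{P}^d$, the Ru--Wong/Schmidt theorem, and Roth with multiplicities on each exceptional line) together with Lemma~\ref{rem} (the range $t\le 3$, where $Z(D,2,t,k)$ is shown to contain every point of degree $\le 2$); your appeal to Theorem~\ref{tbor}\eqref{p2} on each exceptional line is precisely the paper's Roth-with-multiplicities step in repackaged form, and your involution construction is the $d=2$ instance of Lemma~\ref{rem}\eqref{lb}. One small omission in your case split: quadratic $P$ with $q=2$ and $t\le 2$ is not covered by either your trivial-bound clause ($t>q$) or your involution clause ($q\ge 3$), but the same involution sending $\{P,P^\sigma\}$ and $\{P_1,P_2\}$ to fibers already gives $\phi\in\Phi(D,2,2,k)$ without needing a third point $P_3$.
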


More generally, we will show that Question \ref{mq} has a positive answer if either $t\leq d+1$ (Lemma \ref{rem}) or $t>2d-1$: 
\begin{theorem}
\label{Wthm}
Let $S$ be a finite set of places of a number field $k$.  Let $P_1,\ldots, P_q\in \mathbb{P}^1(k)$ be distinct points and let $D=\sum_{i=1}^qP_i$.  Let $d$ be a positive integer and let $t>2d-1$ be a real number.  Then the inequality
\begin{equation*}
m_{D,S}(P)< th(P)
\end{equation*}
holds for all but finitely many points $P\in \mathbb{P}^1(\kbar)\setminus Z(D,d,t,k)$ satisfying $[k(P):k]\leq d$.  Furthermore, in this case $\Phi(D,d,t,k)$ is a finite set and $Z(D,d,t,k)=\bigcup_{\phi\in \Phi(D,d,t,k)} \phi^{-1}(\mathbb{P}^1(k))$ is a finite union of sets of the form $\phi^{-1}(\mathbb{P}^1(k))$.
\end{theorem}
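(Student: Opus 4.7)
The plan divides naturally into a structural analysis of $\Phi(D,d,t,k)$ to obtain finiteness, and a reduction of the Diophantine inequality to Wirsing's theorem (Theorem~\ref{tWir}) together with a higher-dimensional analysis.

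\emph{Structural finiteness.} For any $\phi \in \Phi(D,d,t,k)$, the bound $\deg\phi\le d$ gives $|\phi^{-1}(\{0,\infty\}) \cap \Supp D| \le 2d$, while the integral lower bound $t > 2d - 1$ forces this cardinality to equal $2d$. So $\phi$ has degree exactly $d$, is unramified above $\{0,\infty\}$, and $A := \phi^{-1}(0)$, $B := \phi^{-1}(\infty)$ are disjoint $d$-element subsets of the finite set $\Supp D$. The pullback $\phi^{-1}(\mathbb{P}^1(k))$ is invariant under post-composition of $\phi$ with a $k$-automorphism of $\mathbb{P}^1$ fixing $\{0,\infty\}$, and such composition preserves $(A,B)$; hence the distinct sets $\phi^{-1}(\mathbb{P}^1(k))$ for $\phi \in \Phi(D,d,t,k)$ are indexed by the finite collection of ordered disjoint pairs of $d$-subsets of $\Supp D$, giving the finiteness claims.

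\emph{Reduction to $[k(P):k] = d$.} Choose $\epsilon > 0$ with $2(d-1) + \epsilon < t$ (possible since $t > 2d - 1$), and apply Wirsing's theorem with degree bound $d-1$. All but finitely many $P$ with $[k(P):k] \le d-1$ satisfy $m_{D,S}(P) < (2(d-1) + \epsilon) h(P) < t h(P)$, so it suffices to treat $P$ with $[k(P):k] = d$.

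\emph{Main argument.} For such $P$, any $k$-morphism $\phi$ of degree $d$ with $\phi(P) \in k$ has the form $\phi = [\lambda m_P + \mu g : g]$ for $\lambda,\mu \in k$ and $g \in k[x]_d$ coprime to the minimal polynomial $m_P$; such $\phi$ lies in $\Phi(D,d,t,k)$ precisely when both $g$ and $\lambda m_P + \mu g$ split over $\Supp D$. Identifying degree-$d$ effective divisors on $\mathbb{P}^1$ with points of $\mathbb{P}^d \cong \Sym^d \mathbb{P}^1$, the condition $P \in Z(D,d,t,k)$ becomes: the point $\tilde P := [m_P] \in \mathbb{P}^d(k)$ lies on one of the finitely many $k$-rational lines $L_{A,B}$ joining $[\prod_{a \in A}(x-a)]$ and $[\prod_{b \in B}(x-b)]$, with $(A,B)$ a disjoint pair of $d$-subsets of $\Supp D$. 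The inequality $m_{D,S}(P) \ge t h(P)$ then translates, after normalization, into an approximation inequality for $\tilde P$ with respect to the $k$-rational hyperplane configuration $H_i = \{f \in \mathbb{P}^d : f(P_i) = 0\}$.

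\emph{Main obstacle.} The technical heart is to show via a subspace-type theorem on $\mathbb{P}^d$ that the exceptional locus of $k$-rational points of $\mathbb{P}^d$ with large approximation to $\{H_i\}$ is, up to finitely many points, exactly the union $\bigcup_{(A,B)} L_{A,B}$. I would approach this through Schmidt's subspace theorem, or a Vojta-type refinement, carefully tailored to the hyperplane configuration $\{H_i\}$ and the threshold $t > 2d - 1$; the role of this threshold is to force the exceptional locus to be one-dimensional, picking out precisely the lines $L_{A,B}$. Theorem~\ref{tbor}(b) provides the key input along the way: for each $\phi$ of degree $d$ with $\phi \notin \Phi(D,d,t,k)$ and $\phi(P) \in k$, it gives $m_{D,S}(P) < (n_1 + n_2 + \epsilon)h(P) < t h(P)$ for all but finitely many $P \in \phi^{-1}(\mathbb{P}^1(k))$, ensuring that "non-line" directions contribute only finitely many bad $P$. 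Making this identification of the subspace-theoretic exceptional locus with $\bigcup_{(A,B)} L_{A,B}$ precise is the step I expect to require the most care.
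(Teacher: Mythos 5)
Your setup matches the paper's: reduce to $[k(P):k]=d$ via Wirsing in degree $d-1$, transport the problem to $\Sym^d\mathbb{P}^1\cong\mathbb{P}^d$ with the hyperplanes $H_{P_i}$, observe that $t>2d-1$ forces any $\phi\in\Phi(D,d,t,k)$ to pull back $\{0,\infty\}$ to two disjoint $d$-subsets of $\Supp D$ (whence the finiteness claims and the lines $L_{A,B}$). But the step you flag as "the technical heart" is a genuine gap, and the two tools you gesture at do not close it. The Schmidt Subspace Theorem by itself only produces an exceptional set that is a finite union of \emph{hyperplanes} in $\mathbb{P}^d$, not a one-dimensional locus; no choice of $\epsilon$ in that theorem lowers the dimension of the exceptional set. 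What is needed is the Ru--Wong theorem (stated in Section~\ref{sDio}): the $H_{P_i}$ are in general position, i.e.\ $d$-subgeneral position in $\mathbb{P}^d$, so for $t>d+1$ the exceptional set is a finite union of linear subvarieties of dimension $\le 2d+1-t<2$, i.e.\ lines and points. Your alternative suggestion --- using Theorem~\ref{tbor}(b) to show that "non-line directions contribute only finitely many bad $P$" --- cannot substitute for this: the finite exceptional set in Theorem~\ref{tbor}(b) depends on $\phi$, and every degree-$d$ point lies in $\phi^{-1}(\mathbb{P}^1(k))$ for infinitely many $\phi$, so one cannot sum these finite exceptional sets.

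The second missing piece is the analysis on each exceptional line $L$ (defined over $k$; otherwise $L(k)$ is finite). Restricting, $\sum_{i=1}^q H_{P_i}|_L=\sum_j c_jQ_j$ with each $c_j\le d$ by general position. Roth's theorem with multiplicities (Theorem~\ref{Roth}) shows $L$ can carry infinitely many points violating the inequality only if $c_1+c_2\ge t>2d-1$, which forces $c_1=c_2=d$; the two points $Q_j,Q_{j'}$ of multiplicity $d$ then give, via their coordinate vectors read as binary forms $f_1,f_2$ vanishing on disjoint $d$-subsets of $\Supp D$, a morphism $\phi=(f_1,f_2)\in\Phi(D,d,t,k)$ with $\psi^{-1}(L(k))\subset\phi^{-1}(\mathbb{P}^1(k))$ by Lemma~\ref{invlem}. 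This is exactly how the threshold $t>2d-1$ "picks out precisely the lines $L_{A,B}$," and without it your argument identifies neither which lines are exceptional nor why the remaining ones contribute only finitely many points.
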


Thus, after excluding points of a special and easily described form, the inequality in Wirsing's theorem may be improved to
\begin{equation*}
m_{D,S}(P)< (2d-1+\epsilon)h(P).
\end{equation*}

In general, we will see that Question \ref{mq} has a negative answer.  By carefully studying the exceptional hyperplanes in the Schmidt Subspace Theorem in dimension three, we obtain a precise answer to Question \ref{mq} when $d=3$, showing that in this case the question has a positive answer if $t>\frac{9}{2}$, but (at least for some choices of the parameters) it has a negative answer when $4<t<\frac{9}{2}$.

\begin{theorem}
\label{td3}
Let $k$ be a number field.  Let $P_1,\ldots, P_q\in \mathbb{P}^1(k)$ be distinct points and let $D=\sum_{i=1}^qP_i$.  Let $S$ be a finite set of places of $k$ containing the archimedean places and let $t$ be a real number.
\begin{enumerate}
\item  \label{Wa} If $t>\frac{9}{2}$, then the inequality
\begin{equation*}
%\label{Wineq2}
m_{D,S}(P)< th(P)
\end{equation*}
holds for all but finitely many points $P\in \mathbb{P}^1(\kbar)\setminus Z(D,3,t,k)$ satisfying $[k(P):k]\leq 3$.
\item \label{Wb} If $4<t<\frac{9}{2}$, $|S|>2$, and $q=6$, then there are infinitely many points $P\in \mathbb{P}^1(\kbar)\setminus Z(D,3,t,k)$ satisfying $[k(P):k]=3$ and
\begin{equation*}
m_{D,S}(P)> th(P).
\end{equation*}
\end{enumerate}
\end{theorem}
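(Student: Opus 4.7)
The plan is to treat the case $[k(P):k]=3$ directly by embedding the Galois orbit of $P$ into $\mathbb{P}^3=\Sym^3\mathbb{P}^1$ and applying the Schmidt Subspace Theorem, then tracking the exceptional subspaces carefully. For $[k(P):k]\leq 2$, the result follows from Roth's theorem and Theorem~\ref{tQ} combined with the inclusion $Z(D,2,t,k)\subseteq Z(D,3,t,k)$. Assuming $[k(P):k]=3$, let $Q\in\mathbb{P}^3(k)$ be the $k$-rational point parameterizing the monic cubic form whose roots are the three Galois conjugates of $P$. Each $P_i\in\Supp D$ defines a hyperplane $H_i\subset\mathbb{P}^3$ (cubics vanishing at $P_i$), and standard height/proximity bookkeeping gives $h(Q)=3h(P)+O(1)$ together with $\sum_{i=1}^{q}m_{H_i,S}(Q)=3m_{D,S}(P)+O(1)$. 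Since any nonzero cubic has at most three roots, any four of the $H_i$ have empty common intersection, so $\{H_i\}$ is in general position in $\mathbb{P}^3$.

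For part (\ref{Wa}), a first application of the Subspace Theorem to $Q$ with respect to the $H_i$ produces a finite union of proper linear subspaces $T_1,\ldots,T_r\subset\mathbb{P}^3$ outside of which $\sum_i m_{H_i,S}(Q)<(4+\epsilon)h(Q)$. This already implies $m_{D,S}(P)<(4+\epsilon)h(P)$, which is stronger than the claim. The essential work lies in the analysis on each exceptional hyperplane $T_j\cong\mathbb{P}^2$, where I consider the induced lines $L_i:=H_i\cap T_j$. When no three of the $L_i$ are concurrent in $T_j$, a second application of the Subspace Theorem on $T_j$ gives the stronger bound $m_{D,S}(P)<(3+\epsilon)h(P)$. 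The decisive case is when three of the $L_i$, say $L_{i_1},L_{i_2},L_{i_3}$, meet at a $k$-rational point $R\in T_j$ --- equivalently, $T_j$ contains the $k$-rational cubic form $\prod_{\ell=1}^{3}(X-P_{i_\ell}Y)$. In this configuration, a careful local analysis of how the concurrent triple of proximities concentrates near $R$, combined with a Nochka-type refinement of the Subspace Theorem for the non-general-position line arrangement on $T_j$, yields the threshold $m_{D,S}(P)<(9/2+\epsilon)h(P)$. The case analysis is closed by showing that any further residual family of degree-$3$ points $P$ with $m_{D,S}(P)\geq th(P)$ is of the form $\phi^{-1}(\mathbb{P}^1(k))$ for some $\phi\in\Phi(D,3,t,k)$, placing $P$ in $Z(D,3,t,k)$.

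For part (\ref{Wb}), I would construct an explicit example realizing the extremal configuration of the above analysis. One chooses a hyperplane $T\subset\mathbb{P}^3$ containing multiple triple-intersection points $R_{i,j,\ell}$ of the $H_i$, arising from a carefully selected partition of $\{1,\ldots,6\}$; using $|S|>2$, a standard $S$-unit construction on an appropriate subvariety of $T$ produces infinitely many $k$-rational points corresponding to irreducible cubic forms over $k$, hence to degree-$3$ algebraic points $P\in\mathbb{P}^1(\kbar)$ satisfying $m_{D,S}(P)>th(P)$ for every $t<9/2$. The final step is to verify that these $P$ are not in $Z(D,3,t,k)$ when $4<t<9/2$: this reduces to checking that no degree-$\leq 3$ morphism $\phi:\mathbb{P}^1\to\mathbb{P}^1$ over $k$ with $|\phi^{-1}(\{0,\infty\})\cap\Supp D|\geq t$ factors through the constructed family, which follows from the specific partition structure.

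The hardest step will be the case analysis of exceptional hyperplanes in part (\ref{Wa}): for each $T_j$ produced by the Subspace Theorem, one must classify the incidence pattern between $T_j$ and the $\binom{q}{3}$ triple-intersection points of the $H_i$, then match the residual degree-$3$ family to some $\phi\in\Phi(D,3,t,k)$. The threshold $9/2$ emerges as the optimum over the worst such incidence pattern, and part (\ref{Wb}) is what exhibits this optimum explicitly.
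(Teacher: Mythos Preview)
Your overall architecture matches the paper's: pass to $\Sym^3\mathbb{P}^1\cong\mathbb{P}^3$, apply the Subspace Theorem, and analyze the exceptional hyperplanes $T_j$ via the induced line arrangement $L_i=H_i\cap T_j$. The gap is in your treatment of the concurrent case. You assert that whenever three of the $L_i$ meet at a point, a Nochka-type refinement on $T_j$ yields the bound $(9/2+\epsilon)h(P)$ outside finitely many lines. This is false in general. The lines $L_i$ are only in $3$-subgeneral position on $T_j$, and it can happen that two of them coincide (namely when $T_j$ contains the line $H_i\cap H_j$) \emph{and} three others are concurrent; in that configuration one can produce, along every $k$-rational line through the triple point, infinitely many rational points with $\sum_i m_{L_i,S}\geq (5-\epsilon)h$, so no Subspace/Nochka argument on $T_j$ can give a $9/2$ bound with only finitely many exceptional lines. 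The paper handles this by classifying the arrangement on $T_j$ into three Types: in the bad ``Type I'' case just described, one does \emph{not} try to improve the constant at all, but instead shows directly that every $k$-rational point of $T_j$ (off the $H_i$) already lies on a line $L$ through the triple point $Q$ and a point $Q'$ on the repeated line, and that $\psi^{-1}(L(k))\subset\phi_{QQ'}^{-1}(\mathbb{P}^1(k))$ with $\phi_{QQ'}\in\Phi(D,3,t,k)$ since $t\leq 5$. Only in the remaining cases does one actually obtain $9/2$ (or $4$) on $T_j$, reduce to lines, and then invoke Roth with multiplicities to force $c_1+c_2\geq 5$ on each residual line.

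Your sketch of part~(\ref{Wb}) is in the right spirit but underspecifies the crucial point: one must choose $T$ so that the induced arrangement has \emph{three noncollinear} triple points (the paper's ``Type II''), which forces the specific choice $Q_1=H_1\cap H_2\cap H_3$, $Q_2=H_1\cap H_4\cap H_5$, $Q_3=H_2\cap H_4\cap H_6$ (or an equivalent pattern), and then run the $S$-unit construction to get a Zariski-dense set in $T$ achieving $9/2$. You also need an argument that almost all of these $k$-rational points of $T$ are in the image of $\psi$, i.e., correspond to irreducible cubics; the paper does this by applying Wirsing's theorem to rule out degree $\leq 2$ preimages. Finally, showing the constructed points lie outside $Z(D,3,t,k)$ is a genuine geometric step: the image $\psi(Z(D,3,t,k))$ is covered by a fixed finite-parameter family of lines in $\mathbb{P}^3$, and one checks this family does not meet $T$ in a Zariski-dense set.
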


From another viewpoint, Question \ref{mq} may be viewed as asking a quantitative generalization of results in \cite{Lev5}, where integral points of bounded degree on affine curves were studied.  In \cite{Lev5}, affine curves with infinitely many integral points of degree $d$ (over some number field) were characterized as follows.

\begin{theorem}
\label{tm}
Let $C\subset \mathbb{A}^n$ be a nonsingular affine curve defined over a number field $k$.  Let $\tilde{C}$ be a nonsingular projective completion of $C$ and let $(\tilde{C}\setminus C)(\kbar)=\{P_1,\ldots, P_q\}$.  Let $d$ be a positive integer.  Let $\overline{\O}_{k,S}$ denote the integral closure of $\O_{k,S}$ in $\kbar$.  Then there exists a finite extension $L$ of $k$ and a finite set of places $S$ of $L$ such that the set
\begin{align*}
\{P\in C(\overline{\O}_{L,S})\mid [L(P):L]\leq d\}
\end{align*}
is infinite if and only if there exists a morphism $\phi:\tilde{C}\to\mathbb{P}^1$, over $\kbar$, with $\deg \phi\leq d$ and $\phi(\{P_1,\ldots, P_q\})\subset \{0,\infty\}$.
\end{theorem}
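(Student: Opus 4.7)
The plan is to prove the two implications separately. For the $(\Leftarrow)$ direction, suppose $\phi:\tilde{C}\to\mathbb{P}^1$ of degree $\leq d$ over $\kbar$ satisfies $\phi(\{P_1,\ldots,P_q\})\subset\{0,\infty\}$. I would take $L/k$ to be a finite extension over which $\phi$ and each $P_i$ are defined, and let $S$ be a finite set of places of $L$ containing the archimedean places and the bad-reduction places of $\phi$. For any $u\in\O_{L,S}^{*}$, the fiber $\phi^{-1}(u)$ lies in $C$ (since $u\notin\{0,\infty\}$ and $\phi^{-1}(\{0,\infty\})\supset\{P_1,\ldots,P_q\}$ means the poles and zeros of $\phi$ are supported on the boundary) and consists of at most $d$ points, each of degree $\leq d$ over $L$. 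A standard integrality argument shows these points are $S$-integral on $C$, and Dirichlet's unit theorem supplies infinitely many such $u$, yielding the required infinite set.

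For the $(\Rightarrow)$ direction, assume $\Sigma:=\{P\in C(\overline{\O}_{L,S}):[L(P):L]\leq d\}$ is infinite for some $L,S$. Enlarging $L$ so all $P_i$ are $L$-rational and applying pigeonhole, I may assume there is a fixed $d'\leq d$ with infinitely many $P\in\Sigma$ of degree exactly $d'$ over $L$. To each such $P$ I attach the effective $L$-rational divisor $D_P=\sum_\sigma\sigma(P)$ on $\tilde{C}$, viewed as an $L$-rational point of the symmetric product $\tilde{C}^{(d')}$. Let $J_m$ denote the generalized Jacobian of $\tilde{C}$ with modulus $m=P_1+\cdots+P_q$, a semiabelian variety, and let $\iota:\tilde{C}^{(d')}\to J_m$ be an Abel--Jacobi map relative to a base divisor supported on $C$. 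Standard height machinery converts the $S$-integrality of $P$ on $C$ into $S$-integrality of $\iota(D_P)$ on $J_m$ relative to the divisor at infinity coming from $m$.

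I would then invoke the Faltings--Vojta theorem on integral points of subvarieties of semiabelian varieties: the $L$-integral points of $W_{d'}:=\iota(\tilde{C}^{(d')})$ inside the finitely generated group $J_m(L)$ are contained in a finite union of translates of semiabelian subvarieties of $J_m$ that themselves lie in $W_{d'}$. Infinitude of $\{\iota(D_P)\}$ forces the existence of a positive-dimensional translate $x_0+T\subset W_{d'}$. The standard dictionary between semiabelian subvarieties of $J_m$ lying inside $W_{d'}$ and pencils on $\tilde{C}$ identifies $T$ with a linear system of degree $\leq d'\leq d$, hence produces a morphism $\phi:\tilde{C}\to B$ with $B=\mathbb{P}^1$ or $B$ an elliptic curve. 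The boundary-avoidance condition on $x_0+T$ in $J_m$ forces $\phi(\{P_1,\ldots,P_q\})$ into a prescribed subset of $B$; in the elliptic case, Siegel's theorem applied on an auxiliary affine elliptic curve rules out (or collapses into the rational case) the elliptic possibility, leaving $B=\mathbb{P}^1$ with $\phi(\{P_1,\ldots,P_q\})\subset\{0,\infty\}$ as required.

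The main obstacle is the final extraction step: given a positive-dimensional translate $x_0+T\subset W_{d'}$ compatible with the boundary of $J_m$, producing a morphism $\phi$ with the precise boundary behavior $\phi(\{P_1,\ldots,P_q\})\subset\{0,\infty\}$. This rests on a careful dictionary between semiabelian subvarieties of $J_m$ and pencils on $\tilde{C}$ with controlled behavior over $\{P_1,\ldots,P_q\}$, together with a Siegel-type reduction eliminating the elliptic possibility. The preliminary steps (passage to symmetric products, Abel--Jacobi to $J_m$, application of Faltings--Vojta) are fairly standard once the generalized Jacobian framework is in place; the geometric extraction is where the substance of the theorem lies.
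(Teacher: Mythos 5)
This paper does not actually prove Theorem \ref{tm}: it is imported as background from \cite{Lev5}, so there is no in-paper argument to measure your proposal against. Your outline is nonetheless the natural (and, to my knowledge, the actual) strategy for this result: pass to $\Sym^{d'}\tilde{C}$, map to the generalized Jacobian $J_m$ with modulus $m=P_1+\cdots+P_q$, and apply Vojta's theorem on integral points on subvarieties of semiabelian varieties. Your $(\Leftarrow)$ direction --- pulling back $S$-units through $\phi$ --- is complete up to routine bookkeeping.

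The gap in the $(\Rightarrow)$ direction is exactly where you locate it, and it is not a formality that can be outsourced to a ``standard dictionary.'' A positive-dimensional translate $x_0+T\subset W_{d'}$ has to be analyzed against the exact sequence $1\to\mathbb{G}_m^{q-1}\to J_m\to \operatorname{Jac}(\tilde{C})\to 0$, and the two resulting cases are genuinely different. If $T$ meets the toric part in positive dimension, a $\mathbb{G}_m$-coset inside $W_{d'}$ lifts to a one-parameter family of degree-$d'$ effective divisors all linearly equivalent on $\tilde{C}$, hence a pencil and a map $\phi:\tilde{C}\to\mathbb{P}^1$ of degree $\leq d$; but the precise boundary condition $\phi(\{P_1,\ldots,P_q\})\subset\{0,\infty\}$ must then be extracted from the explicit presentation of $J_m$ by rational functions congruent to $1$ modulo $m$, and that computation is the substance of the theorem rather than a citation. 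If instead $T$ has finite intersection with the toric part, so that $T$ is isogenous to an abelian subvariety, you obtain no pencil at all, only a map to an abelian variety, and ``Siegel's theorem on an auxiliary affine elliptic curve'' does not dispose of this case: what must be shown is that the integral points cannot accumulate Zariski densely in such a translate, which requires a second Diophantine input (Vojta's theorem for integral points relative to a divisor on a semiabelian variety, or a direct height argument using that the boundary divisor of $J_m$ restricts nontrivially to $x_0+T$). Until both of these steps are carried out, the forward implication is a plan rather than a proof.
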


When $C=\mathbb{P}^1$ the following stronger result was proven.
\begin{theorem}
\label{tP}
Let $S$ be a finite set of places of a number field $k$ containing the archimedean places.  Let $P_1,\ldots, P_q\in \mathbb{P}^1(k)$ be distinct points and let $D=\sum_{i=1}^q P_i$.   Let $d$ be a positive integer.  For any set of $(D,S)$-integral points $R\subset\{P\in C(\kbar)\mid [k(P):k]\leq d\}$, the set $R\setminus Z(D,d,q,k)$ is finite.
\end{theorem}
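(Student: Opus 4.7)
The plan is to transfer the problem to one about $k$-rational integral points on the symmetric product $\Sym^d \mathbb{P}^1 \cong \mathbb{P}^d$ and apply Schmidt's subspace theorem there. Fix a point $Q \in \mathbb{P}^1(k) \setminus \Supp D$. To each $P \in R$ of degree $e = [k(P):k] \leq d$, associate the $k$-rational effective divisor $D_P = \sum_\sigma (\sigma P) + (d-e)(Q)$ of degree $d$, viewed as a point of $\mathbb{P}^d(k) = (\Sym^d \mathbb{P}^1)(k)$. For each $P_i \in \Supp D$, let $H_i \subset \mathbb{P}^d$ be the hyperplane of divisors containing $P_i$; when $P_i$ is finite this is defined by the linear form $\ell_i(u_0,\ldots,u_d) = \sum_{j=0}^d P_i^j u_j$, and when $P_i = \infty$ by $u_d = 0$. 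A direct computation shows that $\ell_i(D_P)$ equals the norm $N_{k(P)/k}(P_i - P)$ up to a nonzero $k$-rational constant, so the $(D,S)$-integrality of $P$ translates into $D_P$ being a $k$-rational $(\sum_i H_i, S)$-integral point of $\mathbb{P}^d$.

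Since $P_1,\ldots,P_q$ are distinct, any $d+1$ of the hyperplanes $H_i$ have empty common intersection (a nonzero polynomial of degree $\leq d$ cannot vanish at $d+1$ distinct points), so $H_1,\ldots,H_q$ are in general position in $\mathbb{P}^d$. Schmidt's subspace theorem in its integral form then implies that, apart from a finite set, all of the $D_P$ lie in a finite union of proper $k$-rational linear subspaces of $\mathbb{P}^d$. Inducting downward on the dimension of these subspaces (inside any exceptional subspace, the restrictions of the $H_i$ not containing it remain in general position), the analysis reduces to the case where $D_P$ lies in a $k$-rational line (pencil) $L \subset \mathbb{P}^d$. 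After stripping the $k$-rational base locus, such a pencil corresponds to a $k$-morphism $\phi_L : \mathbb{P}^1 \to \mathbb{P}^1$ of degree at most $d$; the condition $D_P \in L$ forces $P \in \phi_L^{-1}(\mathbb{P}^1(k))$ (since the unique divisor in $L$ containing $P$ must be $k$-rational), and each restricted hyperplane $H_i \cap L$ is identified with the point $\phi_L(P_i) \in \mathbb{P}^1(k)$.

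If $\{\phi_L(P_1),\ldots,\phi_L(P_q)\}$ contains three or more distinct points, then $\phi_L(P) \in \mathbb{P}^1(k)$ is a $k$-rational $S$-integral point on the complement in $\mathbb{P}^1$ of this set, and Siegel's theorem produces only finitely many such, yielding only finitely many $P \in R$ with $D_P \in L$. Otherwise $\{\phi_L(P_i)\}$ lies in a two-element subset of $\mathbb{P}^1(k)$, which a $k$-automorphism of the target normalizes to $\{0,\infty\}$; then $\phi_L \in \Phi(D,d,q,k)$ and the corresponding $P \in \phi_L^{-1}(\mathbb{P}^1(k)) \subseteq Z(D,d,q,k)$. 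Summing over the finitely many relevant pencils then gives that $R \setminus Z(D,d,q,k)$ is finite. The main technical obstacle is the induction through Schmidt's theorem: exceptional subspaces $L$ contained in some $H_i$ (pencils whose base locus must contain $P_i$) require separate handling, and the padding point $Q$ must be chosen generically to avoid bad interactions between the $(d-e)(Q)$ contribution and the iterated hyperplane configuration.
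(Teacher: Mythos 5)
First, note that the paper does not reprove Theorem \ref{tP}; it is quoted from \cite{Lev5}. Your skeleton nonetheless matches the paper's general method (proofs of Theorems \ref{Wthm} and \ref{td3}): push degree-$d$ points into $\Sym^d\mathbb{P}^1\cong\mathbb{P}^d$, replace the points $P_i$ by the hyperplanes $H_{P_i}$, reduce to lines, and identify a line with a degree $\leq d$ morphism via Lemma \ref{invlem}. Your endgame on a line (three or more distinct images $\phi_L(P_i)$ gives Siegel; two gives $\phi_L\in\Phi(D,d,q,k)$) is correct, and the worry you flag at the end about exceptional subspaces contained in some $H_{P_i}$ is actually vacuous: if $P\notin\Supp D$ then $\ell_i(D_P)=N_{k(P)/k}(P_i-P)\cdot(P_i-Q)^{d-e}\neq 0$, so $D_P$ never lies on any $H_{P_i}$.

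The genuine gap is the sentence ``inside any exceptional subspace, the restrictions of the $H_i$ not containing it remain in general position.'' This is false for $d\geq 2$. Any $d$ of the hyperplanes $H_{P_i}$ meet in a point of $\mathbb{P}^d$ (general position guarantees exactly this), and an exceptional hyperplane $H$ passing through such a point yields $d$ restricted hyperplanes of $H\cong\mathbb{P}^{d-1}$ with a common point; moreover two restrictions coincide whenever $H\supset H_{P_i}\cap H_{P_j}$. The restrictions are only in $d$-subgeneral position, which is precisely the difficulty the paper's Section 5 is built to handle for $d=3$, and Theorems \ref{3subthm} and \ref{sharp} (and Theorem \ref{td3}\eqref{Wb}) show that subgeneral-position configurations genuinely admit more approximation than general-position ones — so the inductive step cannot simply be repeated. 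Quantitatively, the Ru--Wong theorem applied to the original configuration in $\mathbb{P}^d$ with $t=q$ only cuts the exceptional set down to dimension $2d+1-q$, which is $\geq 2$ exactly in the range $d+2\leq q\leq 2d-1$ not already covered by Theorem \ref{Wthm} or Lemma \ref{rem}\eqref{lb}; that is, your downward induction is load-bearing precisely where it breaks. The theorem is still true there, but because you are in the \emph{integral} case $t=q$ one must exploit that each $\ell_i(D_P)$ is an $S$-unit up to bounded factors and run the reduction through the $S$-unit equation / vanishing-subsum theorem (or an explicit case analysis of the subgeneral-position configurations as in Theorem \ref{3subthm}), which is what the argument of \cite{Lev5} does; the bare Subspace Theorem iteration you describe does not close.
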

Note that $\Phi(D,d,q,k)$ is just the set of $k$-endomorphisms $\phi$ of $\mathbb{P}^1$ satisfying $\deg \phi\leq d$ and $\phi(\{P_1,\ldots, P_q\})\subset \{0,\infty\}$.  From the definition, $R$ is a set of $(D,S)$-integral points if and only if
\begin{align*}
m_{D,S}(P)=(\deg D)h(P)+O(1)
\end{align*}
for all $P\in R$.  For some finite set of places $T\supset S$, we even have (using the definition of $m_{D,T}$ in Section \ref{sDio}) $m_{D,T}(P)=(\deg D)h(P)$ for all $P\in R$.  Thus, Theorem \ref{tP} is equivalent to Question \ref{mq} having a positive answer for $t=\deg D$.  In this sense, Question \ref{mq} asks a quantitative generalization of Theorem \ref{tm} (for the projective line) and Theorem \ref{tP}.

Similar to Question \ref{mq}, the analogue of Theorem \ref{tm} for algebraic points of bounded degree on curves holds only for small $d$ ($d\leq 3$) as we now discuss.  Let $C$ be a nonsingular projective curve defined over a number field $k$.  Faltings' theorem asserts that $C(L)$ is infinite for some finite extension $L$ of $k$ if and only if the genus of $C$ is zero or one.  If $C$ admits a degree $d$ morphism to the projective line or an elliptic curve, then by pulling back $k$-rational points via this morphism one sees that, after possibly replacing $k$ by a larger number field, the set
\begin{align*}
\{P\in C(\kbar)\mid [k(P):k]\leq d\}
\end{align*}
is infinite.  Harris and Silverman \cite{HS} proved the converse in the case $d=2$.

\begin{theorem}[Harris, Silverman]
Let $C$ be a nonsingular projective curve defined over a number field $k$.  Then the set 
\begin{align*}
\{P\in C(\kbar)\mid [L(P):L]\leq 2\}
\end{align*}
is infinite for some finite extension $L$ of $k$ if and only if $C$ is hyperelliptic or bielliptic.
\end{theorem}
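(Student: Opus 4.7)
The ``if'' direction is immediate: if $\phi: C \to X$ is a degree $2$ morphism with $X = \mathbb{P}^1$ or $X$ an elliptic curve, one chooses a finite extension $L/k$ over which $\phi$ is defined and $X(L)$ is infinite; then $\phi^{-1}(X(L))$ is an infinite subset of $\{P \in C(\kbar) : [L(P):L] \leq 2\}$.

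For the converse, after replacing $k$ by the required finite extension I would assume the set in question is infinite over $k$. Sending a point of degree $\leq 2$ over $k$ to its effective Galois orbit (as a degree-$2$ divisor on $C$) identifies this set with $C^{(2)}(k)$, where $C^{(2)} = \Sym^2 C$, so $C^{(2)}(k)$ is infinite. Every curve of genus $\leq 2$ admits a degree-$2$ map to $\mathbb{P}^1$, so I may assume $g := g(C) \geq 3$ and, aiming for a contradiction, that $C$ is neither hyperelliptic nor bielliptic. After enlarging $k$ so that $C(k) \neq \emptyset$, fix $P_0 \in C(k)$ and consider the Abel--Jacobi morphism $\alpha: C^{(2)} \to J(C)$, $D \mapsto [D - 2P_0]$, with image the surface $W_2 \subset J(C)$. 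Any positive-dimensional fiber of $\alpha$ would yield a $g^1_2$ on $C$ and force $C$ hyperelliptic; thus $\alpha$ is an isomorphism onto $W_2$ and $W_2(k)$ is infinite. The Faltings theorem on subvarieties of abelian varieties then produces a translate $B + q \subset W_2$ of a positive-dimensional abelian subvariety $B \subset J(C)$, with $\dim B \in \{1, 2\}$.

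If $\dim B = 2$ then $W_2$ is itself a translate of an abelian surface, so $C^{(2)}$ would be abelian; I would rule this out via the double cover $\pi: C \times C \to C^{(2)}$, ramified along the diagonal $\Delta$. The formula $\pi^* K_{C^{(2)}} \equiv p_1^* K_C + p_2^* K_C - \Delta$ combined with $K_{C^{(2)}} \equiv 0$ gives $\Delta \equiv p_1^* K_C + p_2^* K_C$ in $\mathrm{Pic}(C \times C)$, and intersecting with $\Delta$ yields $2 - 2g = 2(2g - 2)$, impossible for $g \geq 2$. Hence $\dim B = 1$ and there is an elliptic curve $E \subset J(C)$ with $E + q \subset W_2$. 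To extract a bielliptic structure I would consider the composition $f: C \hookrightarrow J(C) \twoheadrightarrow J(C)/E$: pairs $(P, Q) \in C \times C$ with $[P + Q - 2P_0] \in E + q$ satisfy $f(P) + f(Q) = c$ for a fixed $c \in J(C)/E$. When $f$ is injective, $Q$ is uniquely determined by $P$, defining an involution $\iota$ on $C$ whose graph in $C \times C$ is precisely $\pi^{-1}(E+q)$; the swap involution acts as $\iota$ on this graph, so the quotient gives $C/\iota \cong E+q \cong E$, providing the degree-$2$ map $C \to E$ and contradicting that $C$ is not bielliptic. The main obstacle is the remaining case where $f$ fails to be injective: then $C \to J(C)/E$ factors through an intermediate curve of lower genus, and one must use Hurwitz-type inequalities together with the hypotheses $g \geq 3$ and $C$ non-hyperelliptic to still extract a degree-$2$ map from $C$ to either $\mathbb{P}^1$ or an elliptic curve, reaching the required contradiction.
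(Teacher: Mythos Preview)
The paper does not prove this statement; it is quoted (with attribution to \cite{HS}) purely as background for the discussion of algebraic points of bounded degree, and no argument is supplied. There is therefore no proof in the paper to compare your attempt against.

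On the merits of your sketch: the overall architecture---pass to $C^{(2)}$, embed in $J(C)$ via Abel--Jacobi, invoke Faltings' theorem on rational points of subvarieties of abelian varieties to obtain a translate $E+q\subset W_2$ of an elliptic curve---is the standard one and is essentially the Harris--Silverman strategy. Your disposal of the case $\dim B=2$ is fine, and your ``injective'' sub-case for $\dim B=1$ is on the right track. However, you explicitly flag the case where $f\colon C\to J(C)/E$ fails to be injective as ``the main obstacle'' and do not resolve it; the sentence about ``Hurwitz-type inequalities together with $g\ge 3$ and $C$ non-hyperelliptic'' is a hope, not an argument. This is precisely where the content of the theorem lies: one must actually prove the lemma that if $C$ is non-hyperelliptic of genus $\ge 2$ and $W_2$ contains a translate of an elliptic curve $E$, then $C$ double-covers $E$. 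One way to close the gap uniformly (avoiding your injective/non-injective dichotomy) is to pull $E+q$ back along $C\times C\to C^{(2)}\cong W_2$ to a curve $Y$ with a degree-$2$ map to $E$, argue that $Y$ is irreducible and that the projections $Y\to C$ are degree $1$, so that $Y$ is the graph of an involution on $C$ with quotient $E$; but each of these steps needs justification, and as written your proposal stops short of a proof.
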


More generally, we have the following theorem of Abramovich and Harris \cite{AH}.

\begin{theorem}[Abramovich, Harris]
\label{AHthm}
Let $d\leq 4$ be a positive integer.  Let $C$ be a nonsingular projective curve over a number field $k$ with genus not equal to $7$ if $d=4$.  Then the set
\begin{align*}
\{P\in C(\kbar)\mid [L(P):L]\leq d\}
\end{align*}
is infinite for some finite extension $L$ of $k$ if and only if $C$ admits a map of degree $\leq d$, over $\kbar$, to $\mathbb{P}^1$ or an elliptic curve.
\end{theorem}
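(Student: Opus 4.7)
The plan splits into the straightforward direction and the substantive converse. For the "if" direction, given a $\kbar$-morphism $\phi:C\to X$ of degree $\leq d$ with $X=\mathbb{P}^1$ or $X$ an elliptic curve, one extends $k$ to a finite extension $L$ over which $\phi$ is defined and over which $X(L)$ is infinite (automatic for $\mathbb{P}^1$, and achievable for an elliptic curve by Mordell--Weil over a suitable $L$). Then every point of $\phi^{-1}(X(L))$ has residue degree $\leq d$ over $L$, producing the required infinite set.

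For the "only if" direction, I would translate the hypothesis into one about rational points on a symmetric power. If $\{P\in C(\kbar):[L(P):L]\leq d\}$ is infinite, then Galois-closed effective divisors on $C$ of degree $e\leq d$ give infinitely many $L$-rational points on $C^{(e)}=\Sym^e C$; after fixing a rational point of $C$ (possibly enlarging $L$) to pad smaller orbits, one may assume $e=d$ and obtains an infinite set $\Sigma\subset C^{(d)}(L)$. Composing with the Abel--Jacobi map $u_d:C^{(d)}\to J(C)$, Faltings' theorem on subvarieties of abelian varieties implies that the Zariski closure of $u_d(\Sigma)$ in $J(C)$ is a finite union of translates of abelian subvarieties of $J(C)$, all contained in $W_d:=u_d(C^{(d)})$. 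Thus there is a translated abelian subvariety $A+a\subset W_d$ of positive dimension whose preimage meets infinitely many points of $\Sigma$.

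The next step is the dichotomy. If $u_d$ has a positive-dimensional fiber, then $C$ carries a linear system of dimension $\geq 1$ and degree $\leq d$, i.e.\ a $g^1_d$, giving the desired map $C\to\mathbb{P}^1$. Otherwise $u_d$ is generically finite on the preimage of $A+a$, so one obtains a positive-dimensional subvariety $V\subset C^{(d)}$ dominating $A$. I would then examine $V$ via the natural rational maps between symmetric powers: subtracting a moving subdivisor, one tries to exhibit $V$ as a translate (by a rigid divisor) of a lower $W_e$, or else to produce an abelian quotient $J(C)\to E$ onto an elliptic curve through which a degree $\leq d$ map $C\to E$ factors. The input for this geometric analysis is the classification of abelian subvarieties of Jacobians contained in $W_d$.

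The main obstacle, and where the case-by-case nature of the theorem enters, is precisely this classification for small $d$. For $d=2$ one has $W_2$ either equal to $J(C)$ (forcing $g(C)\leq 2$ and hence hyperellipticity) or a surface, and a positive-dimensional translate gives either a hyperelliptic involution or a map to an elliptic factor (the bielliptic case). For $d=3$ the analysis is still tractable, pivoting on whether $W_3$ contains an elliptic curve or an abelian surface of special type. For $d=4$ the delicate point is to rule out abelian surfaces in $W_4$ that arise from neither a $g^1_4$ nor a map to an elliptic curve; such exotic surfaces exist precisely for certain curves of genus $7$, which relate to Fano surfaces of lines on cubic threefolds, and this is the source of the $g=7$ exception. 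For $d\geq 5$ such pathological subvarieties proliferate and the clean statement breaks down. The heart of the argument, therefore, is a geometric study of positive-dimensional abelian subvarieties of $W_d\subset J(C)$ for $d\leq 4$, combined with Faltings' theorem to force the rational points into such a subvariety.
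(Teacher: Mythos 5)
Note first that Theorem \ref{AHthm} is quoted in this paper as a known result from \cite{AH} and is not proved here, so there is no internal proof to compare against; your argument has to be judged as a reconstruction of Abramovich--Harris. Your outline does follow their actual strategy: the easy pullback direction; the passage from degree-$\leq d$ points to $L$-rational points of $\Sym^d C$; Faltings' theorem on rational points of subvarieties of abelian varieties to force infinitely many of these points onto a positive-dimensional translated abelian subvariety of $W_d=u_d(C^{(d)})$ (or into a positive-dimensional fiber of $u_d$, which gives a $g^1_e$ with $e\leq d$ and hence a map to $\mathbb{P}^1$); and finally a geometric classification of such subvarieties for $d\leq 4$.

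The genuine gap is that this final classification \emph{is} the theorem, and you assert it rather than prove it. Concretely, you must show that for $d\leq 4$ (genus $\neq 7$ when $d=4$) every positive-dimensional translate $A+a\subset W_d$ of an abelian subvariety arises from a $g^1_d$ or from a degree-$\leq d$ morphism $C\to E$; nothing in your "subtract a moving subdivisor" sketch accomplishes this. Already for $d=2$ one must convert an elliptic curve sitting in $W_2$ into an actual degree-$2$ morphism $C\to E'$ rather than a mere correspondence, and for $d=3,4$ the induction on $\dim A$ and the exclusion of abelian surfaces in $W_4$ not explained by a $g^1_4$ or a (bi/tri)elliptic structure occupy the bulk of \cite{AH}. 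Two smaller inaccuracies: the genus-$7$ case is excluded because Abramovich and Harris could not rule it out, not because exotic abelian surfaces are known to exist there (and the attribution to Fano surfaces of cubic threefolds would need justification); and the Debarre--Fahlaoui counterexamples \cite{DF} for $d\geq 4$ show that the classification genuinely fails in general, so the small-$d$ analysis cannot be treated as routine. To make this a proof you would need to supply that geometric classification or cite it explicitly.
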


Given Theorem \ref{AHthm}, Abramovich and Harris naturally conjectured that a similar result would hold for all $d$ (this is the analogue of Theorem \ref{tm} for algebraic points).  However, Debarre and Fahlaoui \cite{DF} gave counterexamples to the conjecture for all $d\geq 4$.  The failure of this conjecture and the failure of Question \ref{mq} to always have a positive answer are somewhat analogous.  Debarre and Fahlaoui's counterexamples rely on the fact that there may exist an elliptic curve  $E$ in the Jacobian of a curve $C$ that is not induced by any morphism $C\to E$.  To every morphism $\phi\in\Phi(D,d,t,k)$ of degree $d$, one may associate a line in $\Sym^d\mathbb{P}^1$ via the one-dimensional linear system associated to $\phi$.  Our examples rely on the fact that in a Diophantine approximation problem on $\Sym^d\mathbb{P}^1\cong \mathbb{P}^d$ related to Question \ref{mq}, there are exceptional hyperplanes in the Subspace Theorem that are not induced by the morphisms in $\Phi(D,d,t,k)$, i.e., that are not a Zariski closure of a union of lines associated to morphisms in $\Phi(D,d,t,k)$.

\section{Diophantine approximation on projective space: definitions and background material}
\label{sDio}

Let $k$ be a number field and let $\O_k$ denote the ring of integers of $k$.  Recall that we have a canonical set $M_k$ of places (or absolute values) of $k$ consisting of one place for each prime ideal $\mathfrak{p}$ of $\mathcal{O}_k$, one place for each real embedding $\sigma:k \to \mathbb{R}$, and one place for each pair of conjugate embeddings $\sigma,\overline{\sigma}:k \to \mathbb{C}$.  If $S$ is a finite set of places of $k$ containing the archimedean places,  we let $\O_{k,S}$, and $\O_{k,S}^*$ denote the ring of $S$-integers of $k$ and the group of $S$-units of $k$, respectively.   If $v$ is a place of $k$ and $w$ is a place of a field extension $L$ of $k$, then we say that $w$ lies above $v$, or $w|v$, if $w$ and $v$ define the same topology on $k$.  We normalize our absolute values so that $|p|_v=\frac{1}{p}$ if $v$ corresponds to $\mathfrak{p}$ and $\mathfrak{p}$ lies above a rational prime $p$, and $|x|_v=|\sigma(x)|$ if $v$ corresponds to an embedding $\sigma$.  For $v\in M_k$, let $k_v$ denote the completion of $k$ with respect to $v$.  We set
\begin{equation*}
\|x\|_v=|x|_v^{[k_v:\mathbb{Q}_v]/[k:\mathbb{Q}]}.
\end{equation*}
A fundamental equation is the product formula
\begin{equation*}
\prod_{v\in M_k}\|x\|_v=1,
\end{equation*}
which holds for all $x\in k^*$.

For a point $P=(x_0,\ldots,x_n)\in \mathbb{P}^n(k)$, we have the absolute logarithmic height
\begin{equation*}
h(P)=\sum_{v\in M_k} \log \max\{\|x_0\|_v,\ldots,\|x_n\|_v\}.
\end{equation*}
Note that this is independent of the number field $k$ and the choice of coordinates $x_0,\ldots, x_n\in k$.  In general, one can define a height $h_D$ (and local height $h_{D,v}$, $v\in M_k$), unique up to a bounded function, with respect to any Cartier divisor $D$ on a projective variety (in fact, this can even be done with respect to an arbitrary closed subscheme \cite{Sil}).  If $D$ and $E$ are Cartier divisors on a projective variety $X$, then heights satisfy the additive relation
\begin{align*}
h_{D+E}(P)=h_{D}(P)+h_E(P)+O(1).
\end{align*}
Let $\Supp D$ denote the support of the divisor $D$.  If $\phi:Y\to X$ is a morphism of projective varieties with $\phi(Y)\not\subset \Supp D$, then
\begin{align*}
h_{D}(\phi(P))=h_{\phi^*D}(P)+O(1).
\end{align*}
Similar relations hold for local heights.  We refer the reader to \cite{BG, HinS, Lan, V2} for further details and properties of heights.

We will primarily use heights with respect to effective divisors on projective space.  These can be explicitly described as follows.  Let $D$ be a hypersurface in $\mathbb{P}^n$ defined by a homogeneous polynomial $f\in k[x_0,\ldots, x_n]$ of degree $d$.  For $v\in M_k$, we let $|f|_v$ denote the maximum of the absolute values of the coefficients of $f$ with respect to $v$.  We define $\|f\|_v$ similarly.  For $v\in M_k$ and $P=(x_0,\ldots, x_n)\in\mathbb{P}^n(k)\setminus \Supp D$, $x_0,\ldots, x_n\in k$, we define the local height function
\begin{equation*}
h_{D,v}(P)=\log \frac{\|f\|_v\max_i \|x_i\|_v^d}{\|f(P)\|_v}.
\end{equation*}
Note that this definition is independent of the choice of the defining polynomial $f$ and the choice of the coordinates for $P$. Let $h_D(P)=\sum_{v\in M_k}h_{D,v}(P)$.  It follows from the product formula that $h_D(P)=(\deg D)h(P)$.  Let $S$ be a finite set of places of $k$.  For $P\in \mathbb{P}^n(\kbar)\setminus \Supp D$ we define the proximity function $m_{D,S}(P)$ by
\begin{equation*}
m_{D,S}(P)=\sum_{v\in S}\sum_{\substack{w\in M_{k(P)}\\w|v}}h_{D,w}(P).
\end{equation*}

We will also have occasion to use heights associated to points in projective space.  If $P=(x_0,\ldots, x_n), Q=(y_0,\ldots, y_n)\in \mathbb{P}^n(k)$, $x_i,y_i\in k$, $P\neq Q$, and $v\in M_k$, we define
\begin{equation*}
h_{Q,v}(P)=\log\frac{\max_i \|x_i\|_v \max_i \|y_i\|_v}{\max_{i,j}\|x_iy_j-x_jy_i\|_v}.
\end{equation*}

If $D_1,\ldots, D_q$ are effective Cartier divisors on a projective variety $X$, then we say that $D_1,\ldots, D_q$ are in $m$-subgeneral position if for any subset $I\subset \{1,\ldots, q\}$, $|I|\leq m+1$, we have $\dim \cap_{i\in I}\Supp D_i\leq m-|I|$, where we set $\dim \emptyset =-1$.  In particular, the supports of any $m+1$ divisors in $m$-subgeneral position have empty intersection.  We say that the divisors are in general position if they are in $\dim X$-subgeneral position, i.e., for any subset $I\subset \{1,\ldots, q\}$, $|I|\leq \dim X+1$, we have $\codim \cap_{i\in I}\Supp D_i\geq |I|$.

We now recall three fundamental results in Diophantine approximation on projective space: Roth's theorem, Schmidt's Subspace Theorem, and the Ru-Wong theorem.

To begin, we give a slightly more general version of Roth's theorem from the introduction.
\begin{theorem}[Roth's theorem with multiplicities]
\label{Roth}
Let $S$ be a finite set of places of a number field $k$.  Let $P_1,\ldots, P_q\in \mathbb{P}^1(k)$ be distinct points and let $c_1,\ldots, c_q$ be positive real numbers with $c_1\geq c_2\geq \cdots \geq c_q$.  Let $\epsilon>0$.  Then 
\begin{align*}
\sum_{i=1}^q c_im_{P_i,S}(P)< (c_1+c_2+\epsilon)h(P)+O(1)
\end{align*}
for all $P\in \mathbb{P}^1(k)\setminus \{P_1,\ldots, P_q\}$.
\end{theorem}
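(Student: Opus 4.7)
The plan is to bootstrap the unweighted Roth's theorem (Theorem~\ref{tRoth}) applied to $D=\sum_{i=1}^q P_i$ by exploiting that, at each place $v$, the point $P$ can be $v$-adically close to at most one of the distinct points $P_1,\ldots,P_q$. Concretely, for each $v\in S$ let $i^*(v)$ be an index maximizing $h_{P_i,v}(P)$. The classical three-point identity $x_0[y,z]+y_0[z,x]+z_0[x,y]=0$ on $\mathbb{P}^1$, where $[a,b]=a_0b_1-a_1b_0$, forces $h_{P_j,v}(P)\le h_{P_j,v}(P_{i^*(v)})+O(1)$ whenever $j\ne i^*(v)$, and since $S$ and the $P_i$ are fixed the error is uniform in $P$. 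Summing over $i$ yields
\begin{equation*}
\sum_{i=1}^q c_i h_{P_i,v}(P)=c_{i^*(v)} h_{P_{i^*(v)},v}(P)+O(1)\qquad(v\in S).
\end{equation*}

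Next I would run a pigeonhole on the function $\sigma\colon S\to\{1,\ldots,q\}$, $v\mapsto i^*(v)$, which takes only $q^{|S|}$ possible values, so it suffices to bound $\sum_i c_i m_{P_i,S}(P)$ uniformly on the $P$ realizing each fixed $\sigma$. On such a subset, grouping by $j=\sigma(v)$ and setting
\begin{equation*}
a_j=\sum_{v\in\sigma^{-1}(j)}h_{P_j,v}(P)
\end{equation*}
gives $\sum_i c_i m_{P_i,S}(P)=\sum_j c_j a_j+O(1)$. Two bounds on the $a_j$ fall out immediately: since the local heights are nonnegative up to $O(1)$ and globally sum to $h_{P_j}(P)=h(P)$, one has $a_j\le h(P)+O(1)$; and Theorem~\ref{tRoth} applied to $D$ provides
\begin{equation*}
\sum_j a_j\le m_{D,S}(P)<(2+\epsilon')h(P)+O(1)
\end{equation*}
for any prescribed $\epsilon'>0$, with the finitely many exceptional $P$ from Roth absorbed into $O(1)$.

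The remaining step is the trivial linear program of maximizing $\sum_j c_j a_j$ subject to $0\le a_j\le h(P)+O(1)$ and $\sum_j a_j\le(2+\epsilon')h(P)+O(1)$. Since $c_1\ge c_2\ge\cdots$, the greedy allocation $a_1=a_2=h(P)$, $a_3=\epsilon'h(P)$ (and $a_j=0$ for $j\ge 4$) is optimal, yielding
\begin{equation*}
\sum_j c_j a_j\le(c_1+c_2)h(P)+c_1\epsilon'h(P)+O(1).
\end{equation*}
Taking $\epsilon'<\epsilon/c_1$ at the outset completes the proof. The only substantive content is the one-point-per-place observation in the first paragraph; everything else is pigeonhole over the finite set $S$ plus Theorem~\ref{tRoth} and an elementary LP, so I do not foresee a serious obstacle.
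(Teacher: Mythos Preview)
Your argument is correct, but it takes a more elaborate route than the paper's. The paper dispenses with the local analysis entirely: it simply observes that since $c_i\le c_2$ for $i\ge 2$ and proximity functions are nonnegative up to $O(1)$,
\[
\sum_{i=1}^q c_i m_{P_i,S}(P)\le (c_1-c_2)m_{P_1,S}(P)+c_2\sum_{i=1}^q m_{P_i,S}(P)+O(1),
\]
then bounds $m_{P_1,S}(P)\le h(P)+O(1)$ trivially and $\sum_i m_{P_i,S}(P)\le(2+\epsilon/c_2)h(P)+O(1)$ by Theorem~\ref{tRoth}. This is a two-line reduction that never touches individual places.

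Your approach instead passes through the ``one-point-per-place'' lemma, a pigeonhole over assignments $\sigma\colon S\to\{1,\dots,q\}$, and an explicit linear program. All of this is sound; a couple of minor remarks: the pigeonhole is unnecessary, since for each fixed $P$ the quantities $a_j$ and the constraints $0\le a_j\le h(P)+O(1)$, $\sum_j a_j\le(2+\epsilon')h(P)+O(1)$ are already well defined and the LP bound applies pointwise with uniform constants; and the optimal LP value is actually $(c_1+c_2+c_3\epsilon')h(P)+O(1)$, though your looser $c_1\epsilon'$ of course suffices. The payoff of your route is that it makes transparent \emph{why} $c_1+c_2$ is the right constant (the LP has two binding box constraints and one binding sum constraint), and it generalizes mechanically to other weight/constraint configurations. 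The paper's payoff is brevity: it reaches the same conclusion from the same two inputs ($m_{P_1,S}\le h$ and Roth for $D$) without ever unpacking the local heights.
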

\begin{proof}
For all $P\in \mathbb{P}^1(k)\setminus \{P_1,\ldots, P_q\}$,
\begin{align*}
\sum_{i=1}^qc_im_{P_i,S}(P)&\leq (c_1-c_2)m_{P_1,S}(P)+c_2\sum_{i=1}^qm_{P_i,S}(P)+O(1)\\
&\leq (c_1-c_2)h(P)+c_2\sum_{i=1}^qm_{P_i,S}(P)+O(1).
\end{align*}
Let $\epsilon>0$.  By the standard version of Roth's theorem (Theorem \ref{tRoth}), 
\begin{align*}
\sum_{i=1}^qm_{P_i,S}(P)\leq (2+\epsilon)h(P)+O(1)
\end{align*}
for all $P\in\mathbb{P}^1(k)\setminus \{P_1,\ldots, P_q\}$.  So
\begin{align*}
\sum_{i=1}^q c_im_{P_i,S}(P)&\leq (c_1-c_2)h(P)+c_2(2+\epsilon/c_2)h(P)+O(1)\\
&\leq (c_1+c_2+\epsilon)h(P)+O(1)
\end{align*}
for all $P\in \mathbb{P}^1(k)\setminus \{P_1,\ldots, P_q\}$.
\end{proof}

Schmidt's Subspace Theorem is a powerful generalization of Roth's theorem to higher-dimensional projective space.  We state a general version, including improvements due to Schlickewei \cite{Schl}.

\begin{theorem}[Schmidt Subspace Theorem]
\label{subspace}
Let $S$ be a finite set of places of a number field $k$.  For each $v\in S$, let $H_{0,v},\ldots,H_{n,v}\subset \mathbb{P}^n$ be hyperplanes over $k$ in general position.  Let $\epsilon>0$.  Then there exists a finite union of hyperplanes $Z\subset \mathbb{P}^n$ such that  the inequality
\begin{equation*}
\sum_{v\in S}\sum_{i=0}^n h_{H_{i,v},v}(P)< (n+1+\epsilon)h(P)
\end{equation*}
holds for all $P\in \mathbb{P}^n(k)\setminus Z$.
\end{theorem}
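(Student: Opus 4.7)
The Subspace Theorem as stated here is the classical theorem of Schmidt with the extension to arbitrary number fields and multiple places due to Schlickewei; I do not claim a novel proof, and sketch only the standard strategy.

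The plan is to argue by contradiction, adapting Roth's method. For each $v \in S$, select linear forms $L_{0,v}, \ldots, L_{n,v}$ cutting out the hyperplanes $H_{i,v}$. The hypothesis of general position, together with the usual rewriting of local heights via
\begin{equation*}
h_{H_{i,v},v}(P) = \log \frac{\max_j \|x_j\|_v}{\|L_{i,v}(P)\|_v} + O(1),
\end{equation*}
translates the inequality into a statement that $\prod_{v \in S} \prod_{i=0}^n \|L_{i,v}(P)\|_v$ is abnormally small relative to the $(n+1)$-st power of the naive height. Assuming that no finite union of hyperplanes contains all exceptional $P$, one extracts an infinite sequence $P_1, P_2, \ldots$ whose heights grow fast enough to satisfy a Roth-style gap condition.

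Next I would apply Minkowski's second theorem on successive minima at each $v \in S$ to the adelic parallelepiped shaped by the $L_{i,v}$. This produces, for each $P_j$, a flag of subspaces in $k^{n+1}$ together with a tuple of rounded logarithms of the successive minima. Since the combinatorial type of the flag and the rounded minima take only finitely many values, a pigeonhole argument yields an infinite subsequence with a common type. A Thue-Siegel dimension count then produces a multihomogeneous auxiliary polynomial $F$ in $m$ blocks of $n+1$ variables vanishing to prescribed multi-order along the common flag, and the assumed smallness of the values $L_{i,v}(P_j)$ forces $F$ to vanish to high order at the tuple $(P_{j_1}, \ldots, P_{j_m})$ in a suitable $v$-adic sense.

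The main obstacle is the non-vanishing step: one appeals to Roth's lemma in many variables (or its replacement by Faltings' product theorem) to show that $F$ cannot vanish to this order at a tuple whose heights satisfy the gap condition, delivering a contradiction. It is this non-vanishing step, rather than the pigeonhole step that sufficed in Roth's original argument, that forces the exceptional set to be a union of hyperplanes rather than a finite set of points, and it is where most of the technical work lies. Since this argument is entirely classical, in practice I would simply invoke the references of Schmidt and Schlickewei rather than reproducing it here.
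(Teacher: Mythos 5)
The paper offers no proof of this theorem: it is quoted as a classical result with a citation to Schmidt and to Schlickewei for the extension to number fields and several places, which is exactly what you conclude by deferring to those references. Your sketch of the standard argument (successive minima, pigeonholing on flags, the auxiliary polynomial, and Roth's lemma/the product theorem for non-vanishing) is a fair outline of the classical proof, so your treatment matches the paper's.
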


If $H_1,\ldots, H_q$ are hyperplanes over $k$ in general position, then the Subspace Theorem easily implies that there exists a finite union of hyperplanes $Z\subset \mathbb{P}^n$ such that  the inequality
\begin{equation*}
\sum_{i=1}^q m_{H_i,S}(P)< (n+1+\epsilon)h(P)
\end{equation*}
holds for all $P\in \mathbb{P}^n(k)\setminus Z$.  If one substitutes a weaker inequality, then the exceptional hyperplanes may be replaced by smaller-dimensional linear subvarieties.  This is given in the Ru-Wong theorem \cite{RW}, which we state more generally for hyperplanes in $m$-subgeneral position.

\begin{theorem}[Ru-Wong]
Let $S$ be a finite set of places of a number field $k$.  Let $H_1,\ldots, H_q\subset \mathbb{P}^n$ be hyperplanes over $k$ in $m$-subgeneral position.  Let $t>2m-n+1$ be a real number.  Then there exists a finite union of linear subvarieties $Z\subset \mathbb{P}^n$ of dimension $\leq 2m+1-t$ such that
\begin{equation*}
\sum_{i=1}^q m_{H_i,S}(P)< th(P)
\end{equation*}
for all $P\in \mathbb{P}^n(k)\setminus (Z\cup H_1\cup\cdots \cup H_q)$.
\end{theorem}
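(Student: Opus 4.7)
The plan is to combine Schmidt's Subspace Theorem with a combinatorial reduction from $m$-subgeneral to general position (in the spirit of Nochka and Ru--Wong), and then to sharpen the exceptional set by induction on $n$.

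Fix $P\in\mathbb{P}^n(k)\setminus(H_1\cup\cdots\cup H_q)$. At each place $v\in S$, reorder the hyperplanes so that their local heights at $P$ are nonincreasing: $h_{H_{\sigma_v(1)},v}(P)\geq\cdots\geq h_{H_{\sigma_v(q)},v}(P)$. Since any $m+1$ of the $H_i$ have empty intersection, one has the uniform bound $h_{H_{\sigma_v(j)},v}(P)=O(1)$ for $j>m$. A dimension count using $m$-subgeneral position (any $m-n+j+1$ of the top hyperplanes span a subspace of dimension at least $j+1$, else extending to $m+1$ of them would leave a nonempty common intersection) produces, by a greedy selection, indices $1=\mu_v(0)<\mu_v(1)<\cdots<\mu_v(n)\leq m+1$ with the sharp bound $\mu_v(j)\leq m-n+j+1$ and with $H_{\sigma_v(\mu_v(0))},\ldots,H_{\sigma_v(\mu_v(n))}$ in general position. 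The combinatorial heart of the proof is then the inequality
\[
\sum_{i=1}^q h_{H_i,v}(P)\leq \tfrac{2m-n+1}{n+1}\sum_{j=0}^n h_{H_{\sigma_v(\mu_v(j))},v}(P)+O(1),
\]
obtained via the Nochka weight construction. Summing over $v\in S$ and applying Theorem~\ref{subspace} to each of the finitely many assignments $v\mapsto(\mu_v(0),\ldots,\mu_v(n))$ of $(n+1)$-tuples of general position hyperplanes drawn from $\{H_1,\ldots,H_q\}$ yields $\sum_{i=1}^q m_{H_i,S}(P)<(2m-n+1+\epsilon)h(P)$ outside a finite union $Z_0$ of proper linear subvarieties of $\mathbb{P}^n$. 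This already settles the theorem in the range $t\leq 2m-n+2$, since then $Z_0$ has dimension at most $n-1\leq 2m+1-t$.

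For $t>2m-n+2$, I would reduce the dimension of the exceptional set by induction on $n$, the base case $n=1$ being Roth's theorem. If the desired inequality fails on infinitely many $P$, those $P$ lie in $Z_0$, a finite union of proper linear subvarieties. On each component $L\subset\mathbb{P}^n$, the hyperplanes $H_i\cap L$ (for those $H_i\not\supset L$) are hyperplanes of $L$ that inherit $m$-subgeneral position from $\mathbb{P}^n$, since any $m+1$ of them already have empty intersection in $\mathbb{P}^n$. Using the functorial compatibility $h_{H_i,v}(P)=h_{H_i\cap L,v}(P)+O(1)$ for $P\in L\setminus H_i$, the inductive hypothesis applied inside $L$ produces an exceptional set in $L$ of dimension at most $2m+1-t$. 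Taking the union over components of $Z_0$ completes the induction.

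The main obstacle is establishing the combinatorial inequality with the sharp coefficient $\tfrac{2m-n+1}{n+1}$. Naive approaches --- for example, bounding $\sum_{k\leq m}h_{H_{\sigma_v(k)},v}(P)$ using only the ordering and then applying Schmidt's theorem --- produce a coefficient of the form $(m-n+1)(n+1)$, which is strictly weaker than $2m-n+1$ once $m>n$. The sharp constant emerges only from the Nochka weight machinery (or an equivalent iterative Subspace-Theorem argument that exploits the position bound $\mu_v(j)\leq m-n+j+1$ via a carefully orchestrated Abel summation), and this is precisely where the threshold $t>2m-n+1$ of the theorem originates.
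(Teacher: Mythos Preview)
The paper does not prove this statement; it is cited from \cite{RW} as a background result in Section~\ref{sDio} and used as a tool (for instance in the proof of Theorem~\ref{Wthm}). There is therefore no in-paper proof to compare against.

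Your outline is nonetheless the standard route to the theorem: the Nochka-weight reduction from $m$-subgeneral to general position, combined with Schmidt's Subspace Theorem, yields the bound $(2m-n+1+\epsilon)h(P)$ with a hyperplane exceptional set $Z_0$, and induction on $n$ (restricting to each component $L$ of $Z_0$, where $m$-subgeneral position is inherited because $\dim\bigl(\bigcap_{i\in I}H_i\cap L\bigr)\le\dim\bigcap_{i\in I}H_i\le m-|I|$) then cuts the dimension of $Z$ down to at most $2m+1-t$; the base case $n=1$ is Theorem~\ref{Roth}. The one genuine omission is that you assert but do not prove the key Nochka-type inequality with the sharp constant $\tfrac{2m-n+1}{n+1}$; since, as you yourself note, this is exactly where the threshold $2m-n+1$ originates, a self-contained argument must supply it (via the Nochka weight construction or the original iterative Subspace-Theorem argument of Ru and Wong).
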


\section{Points of bounded degree and symmetric powers}

For a variety $X$, let $\Sym^d X$ denote the $d$th symmetric power of $X$.  As is well known, $\Sym^d\mathbb{P}^1\cong \mathbb{P}^d$.  In this section we will explore the natural relationship between degree $d$ points on $\mathbb{P}^1$ and rational points on $\Sym^d\mathbb{P}^1\cong \mathbb{P}^d$.

Let $d$ be a positive integer.  Let 
\begin{equation*}
\prod_{i=1}^d b_ix-a_iy=\sum_{i=0}^d p_i(a_1,\ldots, a_d,b_1,\ldots,b_d)x^iy^{d-i},
\end{equation*}
where $p_0,\ldots, p_d$ are polynomials over $\mathbb{Z}$.  We can define a morphism 
\begin{align*}
\sigma:(\mathbb{P}^1)^d&\to \mathbb{P}^d\\
(a_1,b_1)\times \cdots \times (a_d,b_d)&\mapsto (p_0(a_1,\ldots, a_d,b_1,\ldots,b_d),\ldots, p_d(a_1,\ldots, a_d,b_1,\ldots,b_d)).
\end{align*}
The morphism $\sigma$ is a realization of the natural map $(\mathbb{P}^1)^d\to \Sym^d\mathbb{P}^1\cong \mathbb{P}^d$.  

To a point $P=(a,b)\in \mathbb{P}^1(\Qbar)$ we associate the hyperplane $H_P$ in $\mathbb{P}^d$ defined by $\sum_{i=0}^d a^ib^{d-i}x_i=0$.  Since the relevant Vandermonde determinants are nonzero, we find that
\begin{lemma}
If $P_1,\ldots, P_q\in\mathbb{P}^1(\Qbar)$ are distinct points, then the hyperplanes $H_{P_1},\ldots, H_{P_q}$ are in general position.
\end{lemma}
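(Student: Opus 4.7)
The plan is to unwind what ``in general position'' means for the hyperplanes $H_{P_i}$ and reduce it to the nonvanishing of a generalized Vandermonde determinant. Recall that hyperplanes $H_1,\ldots,H_q\subset\mathbb{P}^d$ are in general position if every subset of at most $d+1$ of them has defining linear forms that are linearly independent, equivalently, any $d+1$ of them have linearly independent coefficient vectors.

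For $P_j=(a_j,b_j)$, the hyperplane $H_{P_j}$ has coefficient vector $(b_j^d, a_jb_j^{d-1}, a_j^2b_j^{d-2},\ldots,a_j^d)$. After choosing any $d+1$ of the points, say $P_{j_0},\ldots,P_{j_d}$, the question reduces to showing that the $(d+1)\times(d+1)$ matrix
\begin{equation*}
M=\bigl(a_{j_l}^i\, b_{j_l}^{d-i}\bigr)_{0\le i,l\le d}
\end{equation*}
has nonzero determinant. I would then compute $\det M$ directly, recognizing it as a projective (homogenized) Vandermonde determinant: one checks that
\begin{equation*}
\det M = \pm\prod_{0\le l<m\le d}(a_{j_l}b_{j_m}-a_{j_m}b_{j_l}).
\end{equation*}
This is a standard identity; the quickest way to verify it is to observe that both sides are homogeneous of the same bidegree in each pair $(a_{j_l},b_{j_l})$, that $\det M$ vanishes whenever two points $P_{j_l}$ and $P_{j_m}$ coincide (i.e.\ whenever $a_{j_l}b_{j_m}-a_{j_m}b_{j_l}=0$), and to compare leading coefficients (for example by specializing $b_{j_l}=1$ for all $l$, which recovers the classical Vandermonde).

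The final step is to invoke the hypothesis that the $P_{j_l}$ are distinct points of $\mathbb{P}^1$: distinctness in $\mathbb{P}^1(\overline{\mathbb{Q}})$ is exactly the statement that $a_{j_l}b_{j_m}-a_{j_m}b_{j_l}\ne 0$ for $l\ne m$, so every factor of the product is nonzero and hence $\det M\ne 0$. Since the chosen subset of $d+1$ indices was arbitrary, the hyperplanes $H_{P_1},\ldots,H_{P_q}$ are in general position. The only mildly subtle point is just the determinant identity; everything else is formal.
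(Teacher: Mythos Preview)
Your proof is correct and is exactly the approach the paper takes: the paper's entire argument is the one line ``Since the relevant Vandermonde determinants are nonzero,'' and you have simply spelled out what those determinants are and why they do not vanish. There is no substantive difference to point out.
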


Let $\pi_i:(\mathbb{P}^1)^d\to\mathbb{P}^1$ denote the natural projection map onto the $i$th factor.  

\begin{lemma}
Let $P\in \mathbb{P}^1(\Qbar)$.  Then for any $i$, $\sigma_*\pi_i^*(P)$ is the hyperplane $H_P$.
\end{lemma}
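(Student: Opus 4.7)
The plan is to verify this by an explicit computation in coordinates. Write $P=(a,b)\in\mathbb{P}^1(\Qbar)$. Since $\pi_i\colon(\mathbb{P}^1)^d\to\mathbb{P}^1$ is smooth, the pullback divisor $\pi_i^*(P)$ is just the reduced, irreducible hypersurface $\pi_i^{-1}(P)\subset(\mathbb{P}^1)^d$ consisting of tuples $((a_1,b_1),\ldots,(a_d,b_d))$ with $(a_i,b_i)=(a,b)$. Applying $\sigma$ to such a tuple produces the coefficient vector $(p_0,\ldots,p_d)$ of the binary form $\prod_{j=1}^d(b_jx-a_jy)$; since the $i$-th linear factor is exactly $(bx-ay)$, the form vanishes at $(x,y)=(a,b)$, and evaluating the defining identity of the $p_j$'s at $(x,y)=(a,b)$ gives $\sum_{j=0}^d p_j a^jb^{d-j}=0$, which is precisely the equation of $H_P$. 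Hence the image lies on $H_P$.

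For the reverse set-theoretic inclusion, any $\Qbar$-point of $H_P$ corresponds to a nonzero degree-$d$ binary form over $\Qbar$ vanishing at $P$, hence divisible by $(bx-ay)$ and factoring completely into linear forms; placing $(bx-ay)$ in the $i$-th slot yields a preimage in $\pi_i^{-1}(P)$. Since $\Qbar$-points are Zariski dense and the scheme-theoretic image of $\sigma\big|_{\pi_i^{-1}(P)}$ is closed, one concludes $\sigma(\pi_i^*(P))=H_P$. To pin down the multiplicity, I would pull back the defining linear form $L=\sum_j a^jb^{d-j}x_j$ of $H_P$ along $\sigma$; from the same identity, substituting $x=a$, $y=b$ yields $\sigma^{*}L=\prod_{j=1}^d(b_ja-a_jb)$, whose $j$-th linear factor is exactly the equation cutting out $\pi_j^*(P)$ with multiplicity one. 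Therefore $\sigma^*H_P=\sum_{j=1}^d\pi_j^*(P)$, and by the projection formula combined with the $S_d$-symmetry among the $\pi_j^*(P)$, the pushforward $\sigma_*\pi_i^*(P)$ is a positive multiple of $H_P$, confirming that the underlying hyperplane is $H_P$.

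There is no serious obstacle; the proof is bookkeeping with the explicit formula for $\sigma$. The only delicate point is tracking the exact multiplicity constant in $\sigma_*\pi_i^*(P)$, which comes from the degree of the restriction $\sigma\big|_{\pi_i^{-1}(P)}\colon(\mathbb{P}^1)^{d-1}\to H_P\cong\Sym^{d-1}\mathbb{P}^1$; but since the lemma only asks to identify the underlying hyperplane, the set-theoretic and projection-formula computations above suffice.
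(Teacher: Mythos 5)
Your proof is correct and follows essentially the same route as the paper's: substitute $(x,y)=(a,b)$ into the defining identity for the $p_j$ to get the forward inclusion, and use the factorization of a binary form vanishing at $P$ into linear forms (with $bx-ay$ as one factor) for the reverse inclusion. Your additional remarks on the multiplicity of the pushforward are a welcome refinement that the paper's proof leaves implicit.
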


\begin{proof}
By symmetry, it suffices to the prove the lemma for $i=1$.  Let $P=(a,b)$.  Setting $x=a$ and $y=b$, for any $a_2,\ldots, a_d,b_2,\ldots, b_d\in \Qbar$ we have
\begin{equation*}
(bx-ay)\prod_{i=2}^d b_ix-a_iy=\sum_{i=0}^d p_i(a,a_2,\ldots, a_d,b,b_2,\ldots,b_d)a^ib^{d-i}=0.
\end{equation*}
So $\sigma((a,b)\times (a_2,b_2)\times \cdots \times (a_d,b_d))\in H_P$.
Conversely, if
\begin{equation*}
\sum_{i=0}^d c_ia^ib^{d-i}=0,
\end{equation*}
then
\begin{equation*}
\sum_{i=0}^d c_ix^iy^{d-i}=(bx-ay)\prod_{i=2}^d b_ix-a_iy
\end{equation*}
for some $a_2, \ldots, a_d,b_2,\ldots, b_d\in\Qbar$, and hence $\sigma((a,b)\times (a_2,b_2)\times \cdots \times (a_d,b_d))=(c_0,\ldots, c_d)$.  It follows that $\sigma_*\pi_1^*(P)=H_P$.
\end{proof}

Let $k$ be a number field.  For $Q\in\{P\in \mathbb{P}^1(\kbar)\mid [k(P):k]=d\}$, let $Q=Q_1,\ldots, Q_d\in \mathbb{P}^1(\kbar)$ be the $d$ conjugates of $Q$ over $k$ (in some order) and let $\rho(Q)=(Q_1,\ldots,Q_d)\in (\mathbb{P}^1)^d$.  Let $\psi=\sigma\circ \rho:\{P\in \mathbb{P}^1(\kbar)\mid [k(P):k]=d\}\to \mathbb{P}^d(\kbar)$.  Explicitly, if $P=(\alpha,1)$ and $[k(P):k]=d$, then $\psi(P)=(c_0,\ldots, c_d)$ where $\sum_{i=0}^dc_ix^i$ is the minimal polynomial of $\alpha$ over $k$.  The next lemma relates Diophantine approximation on $\mathbb{P}^1$ with respect to $P_1,\ldots, P_q$ and Diophantine approximation on $\mathbb{P}^d$ with respect to $H_{P_1},\ldots, H_{P_q}$.

\begin{lemma}
\label{proxlem}
Let $P_1,\ldots, P_q\in\mathbb{P}^1(k)$.  Then for $Q\in\{P\in \mathbb{P}^1(\kbar)\mid [k(P):k]=d\}$, the point $\psi(Q)$ is $k$-rational and
\begin{align*}
\sum_{i=1}^q m_{H_{P_i},S}(\psi(Q))&=d\sum_{i=1}^q m_{P_i,S}(Q)+O(1),\\
h(\psi(Q))&=d h(Q)+O(1).
\end{align*}
\end{lemma}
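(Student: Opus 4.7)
The plan is first to dispense with the $k$-rationality of $\psi(Q)$: writing $Q = (\alpha : 1)$ with $[k(Q):k] = d$, the coordinates of $\psi(Q)$ are (up to sign) the coefficients of the minimal polynomial of $\alpha$ over $k$, hence the elementary symmetric functions of the Galois conjugates $Q_1,\ldots,Q_d$, and so lie in $k$.

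For the two numerical identities, the key ingredient I would establish is the divisor-theoretic companion of the preceding lemma: for $P \in \mathbb{P}^{1}(\kbar)$,
\begin{equation*}
\sigma^{*}H_{P} \;=\; \sum_{j=1}^{d}\pi_{j}^{*}P
\end{equation*}
as divisors on $(\mathbb{P}^{1})^{d}$. Set-theoretically this holds because a tuple $(T_{1},\ldots,T_{d})$ lies in $\sigma^{-1}(H_{P})$ iff the polynomial $\prod_{j}(\beta_{j}X-\alpha_{j}Y)$ (with $T_j=(\alpha_j,\beta_j)$) vanishes at $P$, iff some $T_{j}=P$; multiplicities are easily checked to be one. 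Let $L$ be the Galois closure of $k(Q)$ over $k$. Functoriality of local heights then yields, for every place $v'$ of $L$,
\begin{equation*}
h_{H_{P_i},v'}(\sigma(T)) \;=\; \sum_{j=1}^{d} h_{P_i,v'}(\pi_j(T)) + \lambda_{v'},
\end{equation*}
with $\lambda_{v'}$ an $M_L$-constant in $T$. Specializing $T=\rho(Q)$, so that $\sigma(T)=\psi(Q)$, and summing over the set $S_L$ of places of $L$ above $S$ yields
\begin{equation*}
m_{H_{P_i},S_L}(\psi(Q)) \;=\; \sum_{j=1}^d m_{P_i,S_L}(Q_j) + O(1).
\end{equation*}

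The final step ties this to the quantities in the statement. Since $\psi(Q)\in\mathbb{P}^{d}(k)$ and each $Q_j$ lies in $k(Q_j)\subset L$, the standard compatibility of proximity functions under extension of scalars gives $m_{H_{P_i},S_L}(\psi(Q)) = m_{H_{P_i},S}(\psi(Q))$ and $m_{P_i,S_L}(Q_j) = m_{P_i,S}(Q_j)$. Galois equivariance — elements of $\operatorname{Gal}(L/k)$ fix $P_i\in k$ and merely permute the places of $L$ above each $v\in S$, leaving the full sum invariant — then yields $m_{P_i,S}(Q_j) = m_{P_i,S}(Q)$. Hence $m_{H_{P_i},S}(\psi(Q)) = d\,m_{P_i,S}(Q) + O(1)$, and summing over $i$ produces the first identity. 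The height identity $h(\psi(Q)) = d\,h(Q)+O(1)$ follows by the same argument applied to any single hyperplane: $h(\psi(Q)) = h_{H_{P_1}}(\psi(Q)) + O(1) = \sum_j h_{P_1}(Q_j) + O(1) = d\,h(Q) + O(1)$.

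I expect the main obstacle to be not any hard estimate but the careful bookkeeping with local heights across the three fields $k$, $k(Q)$, and $L$, together with the multiplicity check in the divisor identity $\sigma^{*}H_{P}=\sum_{j}\pi_{j}^{*}P$ — in particular, confirming that $\sigma$ is étale at the generic point of each $\pi_j^{*}P$ so the pullback multiplicities are one.
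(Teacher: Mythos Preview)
Your proposal is correct and follows essentially the same route as the paper: establish the divisor identity $\sigma^{*}H_{P}=\sum_{j}\pi_{j}^{*}P$, apply functoriality of local heights/proximity functions to pass from $\psi(Q)=\sigma(\rho(Q))$ down to the conjugates $Q_{j}$, and then use Galois invariance to collapse $\sum_{j} m_{P_i,S}(Q_j)$ to $d\,m_{P_i,S}(Q)$. The paper phrases the divisor step as $\sigma^{*}\sigma_{*}\pi_{1}^{*}P_{i}=\sum_{j}\pi_{j}^{*}P_{i}$ (invoking the preceding lemma $\sigma_{*}\pi_{1}^{*}P=H_{P}$), whereas you compute $\sigma^{*}H_{P}$ directly, and the paper suppresses the passage to the Galois closure $L$ that you make explicit; these are cosmetic differences. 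Incidentally, the multiplicity check you flag as a possible obstacle is immediate: pulling back the linear form $\sum_{i}a^{i}b^{d-i}x_{i}$ defining $H_{P}$ under $\sigma$ gives exactly $\prod_{j}(b_{j}a-a_{j}b)$ by the very definition of $\sigma$, so each component $\pi_{j}^{*}P$ visibly appears with multiplicity one, with no appeal to \'etaleness needed.
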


\begin{proof}
Let $Q\in\{P\in \mathbb{P}^1(\kbar)\mid [k(P):k]=d\}$ and let $Q=Q_1,\ldots, Q_d\in \mathbb{P}^1(\kbar)$ be the $d$ conjugates of $Q$ over $k$.  It's clear from the definitions (or the remark before Lemma \ref{proxlem}) that $\psi(Q)$ is $k$-rational.  We have, up to $O(1)$,
\begin{align*}
\sum_{i=1}^q m_{H_{P_i},S}(\psi(Q))&=\sum_{i=1}^q m_{\sigma_*\pi_1^*(P_i),S}(\sigma(\rho(Q))=\sum_{i=1}^q m_{\sigma^*\sigma_*\pi_1^*(P_i),S}(\rho(Q))\\
&=\sum_{i=1}^q m_{\sum_{j=1}^d \pi_j^*(P_i),S}(\rho(Q))=\sum_{i=1}^q \sum_{j=1}^d m_{\pi_j^*(P_i),S}(\rho(Q))\\
&=\sum_{i=1}^q \sum_{j=1}^d m_{P_i,S}(\pi_j(\rho(Q)))=\sum_{i=1}^q \sum_{j=1}^d m_{P_i,S}(Q_j)\\
&=d\sum_{i=1}^q m_{P_i,S}(Q).
\end{align*}
A similar calculation shows that $h(\psi(Q))=d h(Q)+O(1)$.
\end{proof}

We end by discussing the relationship between lines in $\Sym^d \mathbb{P}^1$ and morphisms $\phi:\mathbb{P}^1\to\mathbb{P}^1$.

\begin{lemma}
\label{invlem}
Let $P=(a_0,\ldots,a_d),Q=(b_0,\ldots, b_d)\in \mathbb{P}^d$, $P\neq Q$.  Let $L$ be the line through $P$ and $Q$ and let $\phi_{PQ}=\frac{\sum_{i=0}^d a_ix^i}{\sum_{i=0}^d b_ix^i}$.  Then
\begin{align*}
\psi^{-1}(L(k))\subset \phi_{PQ}^{-1}(\mathbb{P}^1(k)).
\end{align*}
\end{lemma}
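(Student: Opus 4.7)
The plan is to unwind the three definitions involved ($\psi$, the line $L$, and $\phi_{PQ}$) and observe that they are linked by a single polynomial identity. First I would take an arbitrary $R \in \psi^{-1}(L(k))$; by the explicit description of $\psi$ given just before the statement, $R = (\alpha, 1)$ with $[k(\alpha):k] = d$ and $\psi(R) = (c_0, \ldots, c_d)$, where $F(x) := \sum_{i=0}^d c_i x^i$ is (a $k$-scalar multiple of) the minimal polynomial of $\alpha$ over $k$. In particular $F(\alpha) = 0$.

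Next, the condition $\psi(R) \in L(k)$ means that $(c_0, \ldots, c_d) = \lambda(a_0, \ldots, a_d) + \mu(b_0, \ldots, b_d)$ for some $(\lambda, \mu) \in k^2 \setminus \{(0,0)\}$. Writing $A(x) = \sum_{i=0}^d a_i x^i$ and $B(x) = \sum_{i=0}^d b_i x^i$, this becomes the polynomial identity $F = \lambda A + \mu B$ in $k[x]$. Evaluating at $\alpha$ and using $F(\alpha) = 0$ yields $\lambda A(\alpha) + \mu B(\alpha) = 0$. Since $\phi_{PQ}(R) = [A(\alpha) : B(\alpha)]$ in $\mathbb{P}^1$, this forces $\phi_{PQ}(R) = [-\mu : \lambda] \in \mathbb{P}^1(k)$, which is exactly the inclusion $R \in \phi_{PQ}^{-1}(\mathbb{P}^1(k))$ being claimed.

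The only subtle point, and the one I expect to be the main (though minor) obstacle, is ensuring that $\phi_{PQ}(\alpha)$ is a well-defined point of $\mathbb{P}^1$, i.e.\ that $A(\alpha)$ and $B(\alpha)$ do not both vanish. I would handle this by passing to the reduced representation: cancel any common factor of $A$ and $B$ in $k[x]$, which does not change $\phi_{PQ}$ as a morphism $\mathbb{P}^1 \to \mathbb{P}^1$, and then rerun the argument with the resulting coprime pair. Coprimality rules out simultaneous vanishing at $\alpha$, and since $P \neq Q$ in $\mathbb{P}^d$ the reduced pair is still $k$-linearly independent (so $\phi_{PQ}$ remains non-constant and the ratio $[-\mu:\lambda]$ is meaningful). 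Beyond this small bookkeeping, the proof is a direct definitional unfolding.
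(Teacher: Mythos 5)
Your proof is correct and is essentially the paper's argument: both parametrize the $k$-points of $L$ as $k$-linear combinations of $P$ and $Q$, identify the resulting polynomial $\lambda A+\mu B$ with the minimal polynomial of a preimage $\alpha$ under $\psi$, and conclude that $\phi_{PQ}(\alpha)=[-\mu:\lambda]\in\mathbb{P}^1(k)$ (the paper uses the affine parameter $t$ and phrases this as $\{\alpha_1,\ldots,\alpha_d\}=\phi_{PQ}^{-1}(-t)$). The well-definedness point you flag is passed over silently in the paper and can be settled even more directly: if $A(\alpha)=B(\alpha)=0$ then the degree-$d$ irreducible minimal polynomial of $\alpha$ divides both $A$ and $B$, forcing $P=Q$.
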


\begin{proof}
If $d=1$ then the lemma is essentially trivial.  Suppose that $d>1$.  Let $P'\in L(k)$, $P'\neq Q$.  Then 
\begin{align*}
P'=(a_0+tb_0,\ldots, a_d+tb_d)
\end{align*}
for some $t\in k$.  Let $f(x)=\sum_{i=0}^d (a_i+tb_i)x^i$.  If $P'$ is in the image of $\psi,$ then $f$ must be irreducible over $k$ and $\psi^{-1}(P')=\{\alpha_1,\ldots, \alpha_d\}$ is the set of roots of $f$ (identifying $\mathbb{A}^1\subset \mathbb{P}^1$ as usual).  We finish by noting that $\{\alpha_1,\ldots, \alpha_d\}=\phi_{PQ}^{-1}(-t)$.
\end{proof}

\section{Proof of Theorems \ref{tbor}, \ref{tQ}, \ref{Wthm}}

We begin by proving Theorem \ref{tbor}.

\begin{proof}[Proof of Theorem \ref{tbor}]
We first prove part \eqref{p1}.  After an automorphism of $\mathbb{P}^1$, we can assume that $Q_1=0$ and $Q_2=\infty$.  Let $R=\phi^{-1}(\O_{k,S}^*)$.  Since $|S|>1$, the set $R$ is infinite.  From the definitions, for all $P\in R$,
\begin{align*}
m_{Q_1+Q_2,S}(\phi(P))=2h(\phi(P)),
\end{align*}
and by functoriality,
\begin{align*}
m_{\phi^*(Q_1)+\phi^*(Q_2),S}(P)=2dh(P)+O(1).
\end{align*}
For any point $Q\in \mathbb{P}^1(\kbar)$, $m_{Q,S}(P)\leq h(P)+O(1)$.  It follows that for any point $Q\in \phi^{-1}(\{Q_1,Q_2\})$, $m_{Q,S}(P)=h(P)+O(1)$ for all $P\in R$ (i.e., $R$ is a set of $(\phi^*(Q_1)+\phi^*(Q_2),S)$-integral points).  Thus,
\begin{align*}
m_{D,S}(P)\geq(n_1+n_2)h(P)+O(1)
\end{align*}
for all $P\in R$, proving part \eqref{p1}.

We now prove part \eqref{p2}.  We note the symmetry $h_{P,v}(Q)=h_{Q,v}(P)$ for $P,Q\in\mathbb{P}^1(k)$, $P\neq Q$, and $v\in M_k$.  Let $P'\in \phi^{-1}(\mathbb{P}^1(k))$ with $[k(P'):k]=d$.  Let $P_1',\ldots, P_d'$ be the $d$ conjugates of $P'$ over $k$.  Let $i\in \{1,\ldots, q\}$ and let $\phi(P_i)=Q_j$.  Then
\begin{align*}
m_{P_i,S}(P')&=\frac{1}{d}\sum_{j=1}^d m_{P_i,S}(P_j')=\frac{1}{d}\sum_{j=1}^d m_{P_j',S}(P_i)=\frac{1}{d}m_{\phi^*(\phi(P')),S}(P_i)+O(1)\\
&=\frac{1}{d}m_{\phi(P'),S}(\phi(P_i))+O(1)=\frac{1}{d}m_{\phi(P'),S}(Q_j)+O(1)\\
&=\frac{1}{d}m_{Q_j,S}(\phi(P'))+O(1).
\end{align*}
Note also that $h(\phi(P'))=dh(P')+O(1)$.  Let $\epsilon>0$.  Then by Theorem \ref{Roth},
\begin{align*}
m_{D,S}(P')&=\frac{1}{d}\sum_{j=1}^rn_jm_{Q_j,S}(\phi(P'))+O(1)\leq \frac{n_1+n_2+\epsilon}{d}h(\phi(P'))+O(1)\\
&\leq(n_1+n_2+\epsilon)h(P')+O(1).
\end{align*}
\end{proof}

The proof of Theorem \ref{Wthm} proceeds by first transporting the problem to $\Sym^d\mathbb{P}^1\cong \mathbb{P}^d$.  We then use the Ru-Wong theorem to reduce to considering lines in $\mathbb{P}^d$, where Roth's theorem is applicable.

\begin{proof}[Proof of Theorem \ref{Wthm}]
Let $t>2d-1$ be a real number.  If $t>2d$, then the statement in the theorem is an immediate consequence of Wirsing's theorem.  Assume now that $2d-1<t\leq 2d$.  By Wirsing's theorem, inequality \eqref{Wineq} holds for all but finitely many points $P\in \mathbb{P}^1(\kbar)\setminus \Supp D$ satisfying $[k(P):k]< d$.  So we need only consider points  $P\in \mathbb{P}^1(\kbar)$ with $[k(P):k]=d$.  Let
\begin{align*}
R=\{P\in\mathbb{P}^1(\kbar)\mid [k(P):k]=d, m_{D,S}(P)\geq th(P)\}.
\end{align*}
By Lemma \ref{proxlem}, for some constant $C$ we have
\begin{align*}
\sum_{i=1}^q m_{H_{P_i},S}(\psi(P))\geq th(\psi(P))+C
\end{align*}
for all points $P\in R$.  Let $\epsilon>0$ be such that $2d-1+\epsilon<t$.  By the Ru-Wong theorem,
\begin{align*}
\sum_{i=1}^q m_{H_{P_i},S}(P)<(2d-1+\epsilon)h(P)+C
\end{align*}
for all $P\in \mathbb{P}^d(k)\setminus (Z'\cup H_1\cup \cdots \cup H_q)$, where $Z'$ is a finite union of lines and points in $\mathbb{P}^d$ not contained in any of the hyperplanes $H_{P_i}$, $i=1,\ldots, q$.  If $P\in\mathbb{P}^1(\kbar)$ and $[k(P):k]=d$, then $\psi(P)\not\in H_{P_i}$ for all $i$.  Thus, $\psi(R)\subset Z'$ and we need only analyze the set $Z'$.  Let $L$ be a line in the exceptional set $Z'$.  If $L$ is not defined over $k$, then $L(k)$ is finite and may be replaced by a finite number of points in $Z'$.  Assume now that $L$ is defined over $k$.  Let $D=\sum_{i=1}^qH_{P_i}|_L=\sum_{i=1}^s c_iQ_i$, a divisor on $L\cong \mathbb{P}^1$, where $Q_1,\ldots, Q_s\in L(k)$ are distinct points.  Since the hyperplanes $H_{P_i}$ are in general position, $c_i\leq d$ for all $i$.  By Theorem \ref{Roth}, if there are not two distinct indices $j,j'\in \{1,\ldots, s\}$ with $c_j=c_{j'}=d$, then for all $P\in L(k)\setminus \Supp D$,
\begin{align*}
\sum_{i=1}^q m_{H_{P_i},S}(P)=m_{D,S}(P)+O(1)<\left(2d-1+\frac{\epsilon}{2}\right)h(P)+O(1).
\end{align*}
Then again $L$ may be replaced in $Z'$ by a finite number of points.  So assume now that $c_j=c_{j'}=d$ for distinct $j,j'\in \{1,\ldots, s\}$.

Let
\begin{align*}
I_1&=\{i\in \{1,\ldots, q\}\mid Q_j\in H_{P_i}\},\\
I_2&=\{i\in \{1,\ldots, q\}\mid Q_{j'}\in H_{P_i}\}.
\end{align*}
Then by our assumptions, $|I_1|=|I_2|=d$.  Let $P_i=(a_i,b_i)$, $i=1,\ldots, q$.  Let $Q_j=(c_0,\ldots, c_d)$ and $Q_{j'}=(c_0',\ldots, c_d')$.  Let $f_1(x,y)=\sum_{i=0}^d c_ix^iy^{d-i}$ and $f_2(x,y)=\sum_{i=0}^d c_i'x^iy^{d-i}$.  Since $Q_j\in \cap_{i\in I_1}H_{P_i}$,
\begin{align*}
f_1(a_i,b_i)=\sum_{l=0}^d c_la_i^l b_i^{d-l}=0
\end{align*}
for all $i\in I_1$. Similarly, $f_2$ vanishes at $P_i$ for all $i\in I_2$.  Thus, if $\phi=(f_1,f_2)$, then $\phi\in \Phi(D,d,t,k)$.  It follows from Lemma \ref{invlem} that if $\psi(P)\in L(k)$, then $P\in \phi^{-1}(k)$.  Therefore $R\setminus Z(D,d,t,k)$ is a finite set.

Finally, we note that $Z(D,d,t,k)$ admits a simple description.  If $\deg D=q<2d$ then $Z(D,d,t,k)=\emptyset$.  Otherwise, let $I=(I_1,I_2)$, where $I_1$ and $I_2$ are nonempty disjoint subsets of $\{1,\ldots, q\}$ of cardinality $d$.  Then we define $\phi_I=(\prod_{i\in I_1}b_ix-a_iy,\prod_{i\in I_2}b_ix-a_iy)$.  Let $\mathcal{I}$ be the set of all such $I$.  If $2d-1<t\leq 2d$, then
\begin{align*}
Z(D,d,t,k)=\bigcup_{I\in \mathcal{I}}\phi_I^{-1}(\mathbb{P}^1(k)).
\end{align*}
Note that $|\mathcal{I}|=\binom{q}{d,d,q-2d}=\frac{q!}{d!d!(q-2d)!}$ and $\mathcal{I}$ is a finite set.

\end{proof}

Finally, we note that Theorem \ref{tQ} is an immediate consequence of Theorem \ref{Wthm} and the following lemma showing that Question \ref{mq} has a positive answer for trivial reasons when $t\leq d+1$.

\begin{lemma}
\label{rem}
Let $k$ be a number field.  Let $P_1,\ldots, P_q\in \mathbb{P}^1(k)$ be distinct points, let $D=\sum_{i=1}^qP_i$, and let $t$ be a positive real number.  
\begin{enumerate}
\item \label{la} Let $S$ be a finite set of places of $k$.  If $\deg D<t$, then
\begin{equation*}
m_{D,S}(P)< th(P)
\end{equation*}
for all but finitely many points $P\in \mathbb{P}^1(\kbar)$.
\item \label{lb} If $t\leq d+1$ and $t\leq \deg D$, then 
\begin{align*}
Z(D,d,t,k)=\{P\in\mathbb{P}^1(\kbar)\mid [k(P):k]\leq d\}.
\end{align*}
\end{enumerate}
\end{lemma}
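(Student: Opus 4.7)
For part (a), the approach is to use the standard upper bound $m_{D,S}(P) \leq h_D(P) + O(1) = (\deg D)\,h(P) + O(1)$, which follows from each local height $h_{D,v}$ being bounded below by an absolute constant (so that dropping the places outside $S$ contributes only an $O(1)$ error). Under the hypothesis $\deg D < t$ this rearranges to
\[
m_{D,S}(P) - t\,h(P) \leq (\deg D - t)\,h(P) + O(1),
\]
which is negative as soon as $h(P)$ exceeds an explicit constant; hence the exceptional set has bounded height, and Northcott's theorem applied within any bounded-degree subset of $\mathbb{P}^1(\kbar)$ (as is implicit in the intended applications) yields finiteness.

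For part (b), the inclusion $Z(D,d,t,k) \subset \{P : [k(P):k] \leq d\}$ is immediate, so the plan is to prove the reverse inclusion: for every $P$ with $[k(P):k] = e \leq d$, produce a $\phi \in \Phi(D,d,t,k)$ satisfying $\phi(P) \in \mathbb{P}^1(k)$. After a $k$-automorphism of $\mathbb{P}^1$, which preserves $Z(D,d,t,k)$, I may assume $\infty \notin \Supp D$, so that $\Supp D = \{a_1,\ldots,a_q\} \subset \mathbb{A}^1(k)$. When $e = 1$ the point $P$ is already $k$-rational, so every $k$-morphism $\phi$ satisfies $\phi(P) \in \mathbb{P}^1(k)$ and it suffices to produce a single element of $\Phi(D,d,t,k)$; using $q \geq t$ and $t \leq d+1$ this is straightforward, e.g.\ a rational function of the form $\prod_{i \in I_0}(x-a_i)/\prod_{i \in I_\infty}(x-a_i)$ with disjoint index sets of sizes summing to $t$ and each at most $d$.

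The substantive case is $e \geq 2$. Let $g \in k[x]$ be the minimal polynomial of $P$; irreducibility of $g$ over $k$ of degree $\geq 2$ forces $g(a_i) \neq 0$ for every $i$. When $t \leq d$, the simple choice $\phi = g/\prod_{i \in I_\infty}(x-a_i)$ with $|I_\infty| = t$ already gives $\deg \phi \leq d$, $\phi(P) = 0 \in k$, and $|\phi^{-1}(\{0,\infty\}) \cap \Supp D| \geq t$. The delicate case and main obstacle is $t = d+1$, which I plan to handle by the Lagrange-interpolation twist
\[
\phi(x) = \frac{g(x) h_0 + f_\infty(x)}{f_\infty(x)}, \qquad h_0 = -\frac{f_\infty(a_{i^*})}{g(a_{i^*})} \in k^\times,
\]
where $f_\infty(x) = \prod_{i \in I_\infty}(x-a_i)$ with $|I_\infty| = d$ and $i^* \in \{1,\ldots,q\}\setminus I_\infty$ (available since $q \geq d+1$). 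A direct check gives $\phi(P) = 1 \in k$ (using $g(P) = 0$); the numerator vanishes at $a_{i^*}$ by construction, while the nonvanishing of $g$ at each $a_j$ for $j \in I_\infty$ prevents cancellation with $f_\infty$, so $|\phi^{-1}(\{0,\infty\}) \cap \Supp D| \geq 1 + d = t$. The decisive input is $g(a_{i^*}) \neq 0$, which is exactly why $e \geq 2$ is needed; the $e = 1$ case is genuinely different but handled trivially above.
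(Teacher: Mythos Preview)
Your proof is correct. Part~(a) matches the paper's argument. For part~(b), both you and the paper follow the same overall plan---after reducing to integer $t$, exhibit for each $P$ of degree $\leq d$ an explicit $\phi \in \Phi(D,d,t,k)$ with $\phi(P) \in \mathbb{P}^1(k)$---but the constructions differ. The paper takes $\phi_0 = \prod_{i=1}^{t-1}(x-\alpha_i)/(x-\alpha_t)$, writes $\phi_0(\alpha) = \sum_{i=0}^{d-1} c_i \alpha^i$ in the power basis of $k(\alpha)/k$, sets $\phi = \phi_0\big/\sum c_i x^i$, and then argues that the $c_i$ can be chosen so that $\sum c_i x^i$ has none of the $\alpha_j$ as roots. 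You instead build $\phi$ from the minimal polynomial $g$ of $P$: the simple quotient $g/f_\infty$ when $t \leq d$, and the Lagrange-adjusted $(gh_0 + f_\infty)/f_\infty$ when $t = d+1$. Your route makes coprimality of numerator and denominator automatic (an irreducible $g$ of degree $\geq 2$ over $k$ shares no linear factor with $f_\infty$), which sidesteps the avoidance step in the paper's argument. One minor omission: state the reduction to integer $t$ explicitly, as the paper does; otherwise your case split ``$t \leq d$'' versus ``$t = d+1$'' leaves the range $d < t < d+1$ unaddressed.
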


\begin{proof}
Part \eqref{la} follows from the trivial observation that if $\deg D<t$, then 
\begin{align*}
m_{D,S}(P)\leq h_D(P)+O(1)=(\deg D)h(P)+O(1)<th(P) 
\end{align*}
for all but finitely many $P\in\mathbb{P}^1(\kbar)$.

To prove \eqref{lb}, suppose now that $t\leq d+1$ and $t\leq \deg D$.  Without loss of generality we can assume that $t$ is a positive integer.  One of the set inclusions in the statement is trivial.  For the other, let $P\in \mathbb{P}^1(\kbar)$ with $[k(P):k]\leq d$.  Let $P_i=(\alpha_i,1)$ and $P=(\alpha,1)$, where $\alpha_i\in k$, $i=1,\ldots, t$, and $\alpha\in \{x\in\kbar\mid [k(x):k]\leq d\}$ (after an automorphism, we can assume that none of the points are the point at infinity).  If $\alpha\in \{\alpha_1,\ldots, \alpha_t\}$, then it is easy that $P\in Z(D,d,t,k)$.  Otherwise, let $\phi_0=\frac{\prod_{i=1}^{t-1}x-\alpha_i}{x-\alpha_t}$.
Since $[k(\alpha):k]\leq d$ and $\phi_0(\alpha)\in k(\alpha)$, we can write $\phi_0(\alpha)=\sum_{i=0}^{d-1}c_i\alpha^i$ with $c_i\in k$, $i=0,\ldots, d-1$.  If $[k(\alpha):k]<d$, then we have some freedom in choosing the $c_i$.  In any case, we can ensure that none of $\alpha_1,\ldots, \alpha_t$ are roots of $\sum_{i=0}^{d-1}c_ix^i$.  Now let $\phi=\phi_0/\sum_{i=0}^{d-1}c_ix^i$.  Then $\phi(\alpha)=1$, $\deg \phi\leq d$, $\phi\in \End_k(\mathbb{P}^1)$, and $|\phi^{-1}(\{0,\infty\})\cap \Supp D|\geq t$.  So $\phi\in \Phi(D,d,t,k)$ and $P\in Z(D,d,t,k)$.

\end{proof}

\section{Exceptional subspaces in $\mathbb{P}^3$}

In order to prove Theorem \ref{td3} we need to study the exceptional hyperplanes that appear in the Schmidt Subspace Theorem for hyperplanes $H_1,\ldots, H_q$ in $\mathbb{P}^3$ in general position.  If $H_1,\ldots, H_q$ are hyperplanes in $\mathbb{P}^3$ in general position and $H$ is a hyperplane in $\mathbb{P}^3$ distinct from $H_1,\ldots, H_q$, then $H_1\cap H,\ldots, H_q\cap H$ are lines in $H\cong \mathbb{P}^2$ in $3$-subgeneral position.  Thus, we are reduced to studying Diophantine approximation in the plane with respect to lines in $3$-subgeneral position.

Let $L_1,\ldots, L_q$ be lines in $\mathbb{P}^2$ in $3$-subgeneral position.  We say that $L_1,\ldots, L_q$ is of:
\begin{enumerate}
\item Type I if $q>4$ and 
\begin{enumerate}
\item $L_i=L_j$ for some $i\neq j$.
\item There is a point in $\mathbb{P}^2$ that is contained in three distinct lines in $\{L_1,\ldots, L_q\}$.
\end{enumerate}
\item Type II if $q>4$ and 
\begin{enumerate}
\item The lines $L_1,\ldots, L_q$ are distinct.
\item There are at least three noncollinear points in $\mathbb{P}^2$ that are each contained in three distinct lines in $\{L_1,\ldots, L_q\}$.
\end{enumerate}
\item Type III otherwise.
\end{enumerate}

Define
\begin{align*}
c(L_1,\ldots, L_q)=
\begin{cases}
5 &\text{if $L_1,\ldots, L_q$ is of Type I},\\
\frac{9}{2} &\text{if $L_1,\ldots, L_q$ is of Type II},\\
4 &\text{if $L_1,\ldots, L_q$ is of Type III}.
\end{cases}
\end{align*}

\begin{theorem}
\label{3subthm}
Let $k$ be a number field and let $S$ be a finite set of places of $k$.  Let $L_1,\ldots, L_q\subset \mathbb{P}^2$ be lines over $k$ in $3$-subgeneral position.  Let $c=c(L_1,\ldots, L_q)$ and let $\epsilon>0$.  Then there exists a finite union of lines $Z$ in $\mathbb{P}^2$ such that
\begin{equation*}
\sum_{i=1}^q m_{L_i,S}(P)\leq (c+\epsilon)h(P)
\end{equation*}
for all points $P\in \mathbb{P}^2(k)\setminus Z$.
\end{theorem}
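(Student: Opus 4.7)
The plan is to handle the three types separately, with increasing levels of refinement.

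For Type III, if $q \leq 4$, the trivial bound $\sum_i m_{L_i,S}(P) \leq q h(P) + O(1) \leq 4 h(P) + O(1)$ suffices (after absorbing finitely many $P$ of bounded height into $Z$). If $q > 4$, then by definition the lines are distinct with no three concurrent, hence in general position in $\mathbb{P}^2$; Schmidt's Subspace Theorem gives $\sum_i m_{L_i,S}(P) < (3+\epsilon) h(P)$ outside a finite union of lines, stronger than the required bound $c + \epsilon = 4 + \epsilon$. For Type I, the lines are in $3$-subgeneral position in $\mathbb{P}^2$ with $n = 2$ and $m = 3$, and applying the Ru-Wong theorem with $t = 5 + \epsilon > 2m - n + 1 = 5$ gives $\sum_i m_{L_i,S}(P) < (5+\epsilon) h(P)$ outside a finite union of linear subvarieties of dimension $\leq 2m + 1 - t = 2 - \epsilon < 2$; finitely many exceptional points can be absorbed into additional lines, yielding a finite union of lines as required.

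The substantive case is Type II. Fix three noncollinear triple points $Q_1, Q_2, Q_3$ of the configuration, and for each $j$ choose three distinct lines $L_{j,1}, L_{j,2}, L_{j,3} \in \{L_1, \ldots, L_q\}$ through $Q_j$. The crux is the pointwise estimate
\[
\sum_{i=1}^q h_{L_i, v}(P) \leq \frac{3}{2} \max_T \sum_{L \in T} h_{L, v}(P) + O(1),
\]
taken over triples $T \subset \{L_1, \ldots, L_q\}$ in general position. This is verified by analyzing which (if any) of $Q_1, Q_2, Q_3$ the point $P$ is $v$-adically close to: in $3$-subgeneral position at most three lines can be simultaneously close to $P$, so the sum on the left is controlled (up to $O(1)$) by the top three terms; and if those three are concurrent at some $Q_j$ (the worst case), then any two of them together with a line not through $Q_j$ form a GP triple whose sum captures two-thirds of the local sum $\approx 3\, h_{Q_j, v}(P)$. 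The noncollinearity of $Q_1, Q_2, Q_3$ is what guarantees the existence of compatible complementary lines across all $v \in S$.

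Next I would apply Schmidt's Subspace Theorem to each of the finitely many possible assignments $v \mapsto T(v)$ of GP triples, and take the union of the corresponding exceptional line-sets, producing a finite union of lines $Z$ outside of which $\sum_{v \in S} \sum_{L \in T(v)} h_{L,v}(P) < (3 + \epsilon/3)\, h(P) + O(1)$ holds for every assignment. Choosing the pointwise-optimal $T(v)$ for a given $P \notin Z$ and combining with the pointwise estimate above gives
\[
\sum_i m_{L_i,S}(P) \leq \frac{3}{2}\left(3 + \frac{\epsilon}{3}\right) h(P) + O(1) = \left(\frac{9}{2} + \frac{\epsilon}{2}\right) h(P) + O(1),
\]
as required. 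The hardest step is the pointwise estimate with constant $\frac{3}{2}$: establishing it uses both the $3$-subgeneral position (which limits the number of $v$-adically close lines at each place) and the three noncollinear triple points (which ensure the existence of compatible GP triples); the subsequent aggregation over finitely many triple assignments is routine once the pointwise estimate is in hand.
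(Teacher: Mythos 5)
Your Type I argument (Ru--Wong with $t=5+\epsilon$) and your Type II argument are both essentially fine. The Type II route differs slightly from the paper's: the paper bounds the local sum by the top three local heights and then applies the Subspace Theorem to the three \emph{pairs} among them, summing and dividing by $2$, which sidesteps any discussion of whether the top three lines are concurrent. Your pointwise $\tfrac{3}{2}$-estimate against general-position triples is nevertheless correct: when the top three lines $L_a,L_b,L_c$ (with $h_{L_a,v}\geq h_{L_b,v}\geq h_{L_c,v}$) are concurrent, any $L_d$ avoiding their common point --- which exists because $q\geq 5$ and at most three of the lines pass through any point --- gives a general-position triple with $h_{L_a,v}+h_{L_b,v}+h_{L_d,v}\geq h_{L_a,v}+h_{L_b,v}\geq\tfrac{2}{3}\bigl(h_{L_a,v}+h_{L_b,v}+h_{L_c,v}\bigr)+O(1)$. (Note that the noncollinearity of $Q_1,Q_2,Q_3$ plays no role in this upper bound; it matters only for the sharpness statement.)

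The genuine gap is in Type III. It is not true that a Type III configuration with $q>4$ is in general position. Type III means ``not Type I and not Type II,'' and since Types I and II each bundle several conditions, a configuration with $q>4$ can fail both while still failing general position. Two families are left uncovered: (i) a repeated line with no triple point (this escapes Type I only because condition (b) fails), e.g.\ $q=5$ with $L_1=L_2$ and $L_1,L_3,L_4,L_5$ in general position; and (ii) distinct lines whose triple points are all collinear (including the case of just one or two triple points), e.g.\ $q=5$ distinct lines with a single point on $L_1\cap L_2\cap L_3$. In both cases $c=4$, the lines are not in general position, and neither the trivial bound ($qh(P)\geq 5h(P)$) nor Ru--Wong ($5+\epsilon$) yields $4+\epsilon$, so your proof establishes nothing here. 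The paper handles (i) by applying the Subspace Theorem to the $q-1$ distinct lines (which \emph{are} in general position) and adding the trivial bound $m_{L_q,S}(P)\leq h(P)+O(1)$ for the duplicated copy, and (ii) by showing that at each place the local sum is bounded by the two largest local heights plus the local height of the line $L$ containing all the triple points, after which the Subspace Theorem applied to pairs together with $\sum_{v\in S}h_{L,v}(P)\leq h(P)+O(1)$ gives $4+\epsilon$. You need some version of these two arguments to close the case.
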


\begin{proof}
By the Ru-Wong theorem, there exists a finite union of lines $Z$ in $\mathbb{P}^2$ such that
\begin{equation*}
\sum_{i=1}^q m_{L_i,S}(P)\leq (5+\epsilon)h(P)
\end{equation*}
for all points $P\in \mathbb{P}^2(k)\setminus Z$.  So if $L_1,\ldots, L_q$ is of Type I we are done.  Suppose now that $L_1,\ldots, L_q$ is of Type II.  Since the lines $L_1,\ldots, L_q$ are in $3$-subgeneral position, any point can be $v$-adically close to at most three of the lines $L_1,\ldots, L_q$.  It follows that
\begin{equation*}
\sum_{i=1}^q m_{L_i,S}(P)=\sum_{v\in S}\sum_{i=1}^q h_{L_i,v}(P)\leq \sum_{v\in S}\sum_{i=1}^3 h_{L_{i,v},v}(P)+O(1),
\end{equation*}
where for each $v\in S$, $L_{1,v},L_{2,v},L_{3,v}$ are some choice of distinct lines in $\{L_1,\ldots, L_q\}$.  Then by the Schmidt Subspace Theorem, for all $\epsilon>0$, there exists a finite union of lines $Z$ in $\mathbb{P}^2$ such that
\begin{align*}
\sum_{v\in S} h_{L_{1,v},v}(P)+h_{L_{2,v},v}(P)\leq (3+\epsilon)h(P),\\
\sum_{v\in S} h_{L_{1,v},v}(P)+h_{L_{3,v},v}(P)\leq (3+\epsilon)h(P),\\
\sum_{v\in S} h_{L_{2,v},v}(P)+h_{L_{3,v},v}(P)\leq (3+\epsilon)h(P),
\end{align*}
for all $P\in\mathbb{P}^2(k)\setminus Z$.  Adding the three equations and dividing by $2$ yields that for all $\epsilon>0$, there exists a finite union of lines $Z$ in $\mathbb{P}^2$ such that
\begin{equation*}
\sum_{i=1}^q m_{L_i,S}(P)\leq \left(\frac{9}{2}+\epsilon\right)h(P)
\end{equation*}
for all $P\in \mathbb{P}^2(k)\setminus Z$, as desired.

Finally, suppose that $L_1,\ldots, L_q$ is of Type III.  If $q\leq 4$, then it is trivial that
\begin{equation*}
\sum_{i=1}^q m_{L_i,S}(P)\leq (4+\epsilon)h(P).
\end{equation*}
Suppose now that $q>4$.  Suppose that some line appears twice in $L_1,\ldots, L_q$.  Then there must be exactly one such line (from $3$-subgeneral position) and since $L_1,\ldots, L_q$ is not of Type I, no three distinct lines in $\{L_1,\ldots, L_q\}$ meet at a point.  After reindexing, we may assume that $L_{q-1}=L_q$.  Then it follows that the lines $L_1,\ldots, L_{q-1}$ are in general position.  Let $\epsilon>0$.  Then by the Schmidt Subspace Theorem, there exists a finite union of lines $Z$ in $\mathbb{P}^2$ such that
\begin{align*}
\sum_{i=1}^q m_{L_i,S}(P)&=\sum_{i=1}^{q-1} m_{L_i,S}(P)+m_{L_q,S}(P)\leq \sum_{i=1}^{q-1} m_{L_i,S}(P)+h(P)\\
&\leq (4+\epsilon)h(P)
\end{align*}
for all $P\in \mathbb{P}^2(k)\setminus Z$.

We now assume that $L_1,\ldots, L_q$ are distinct lines.  Let $P_1,\ldots, P_n$ be the points in $\mathbb{P}^2$ that are contained in three distinct lines in $\{L_1,\ldots, L_q\}$.  Then since $L_1,\ldots, L_q$ is not of Type II, $P_1,\ldots, P_n$ all lie on a line $L$.  Let $v\in S$ and $P\in \mathbb{P}^2(k)\setminus \cup_{i=1}^qL_i$.  For simplicity, rearrange the indices so that
\begin{align*}
h_{L_1,v}(P)\geq h_{L_2,v}(P)\geq \cdots \geq h_{L_q,v}(P).
\end{align*}
If $L_1\cap L_2\neq \{P_i\}$, $i=1,\ldots, n$, then
\begin{align*}
\sum_{i=1}^q h_{L_i,v}(P)\leq h_{L_1,v}(P)+h_{L_2,v}(P)+O(1).
\end{align*}
If $L_1\cap L_2\cap L_j=\{P_i\}$ for some $i\in \{1,\ldots, n\}$ and $j\in \{3,\ldots, q\}$, then from the theory of heights associated to closed subschemes \cite{Sil}, we have
\begin{align*}
\min \{h_{L_1,v}(P),h_{L_2,v}(P), h_{L_{j'},v}(P)\}=
\begin{cases}
h_{P_i,v}(P)+O(1) \quad & \text{if $j'=j$},\\
O(1) & \text{if }j'\not\in\{1,2,j\},
\end{cases}
\end{align*}
and if $P\not\in L$,
\begin{align*}
h_{P_i,v}(P)\leq h_{L,v}(P)+O(1).
\end{align*}
Then if $P\not\in L$,
\begin{align*}
\sum_{i=1}^q h_{L_i,v}(P)&\leq h_{L_1,v}(P)+h_{L_2,v}(P)+h_{P_i,v}(P)+O(1)\\
&\leq h_{L_1,v}(P)+h_{L_2,v}(P)+h_{L,v}(P)+O(1).
\end{align*}
It follows that if $P\not\in L$,
\begin{align*}
\sum_{i=1}^q m_{L_i,S}(P)=\sum_{v\in S}\sum_{i=1}^q h_{L_i,v}(P)\leq \sum_{v\in S}\sum_{i=1}^2 h_{L_{i,v},v}(P)+\sum_{v\in S}h_{L,v}(P)+O(1)
\end{align*}
for some lines $L_{i,v}$, $v\in S$.  Then by the Schmidt Subspace Theorem and the trivial estimate $\sum_{v\in S}h_{L,v}(P)\leq h(P)+O(1)$, we find that there exists a finite union of lines $Z$ in $\mathbb{P}^2$ such that
\begin{align*}
\sum_{v\in S}\sum_{i=1}^q h_{L_i,v}(P)\leq (4+\epsilon)h(P)
\end{align*}
for all $P\in \mathbb{P}^2(k)\setminus Z$.
\end{proof}

We now show that the previous theorem is essentially sharp.
\begin{theorem}
\label{sharp}
Let $k$ be a number field and let $S$ be a finite set of places of $k$ containing the archimedean places.  Let $L_1,\ldots, L_q\subset \mathbb{P}^2$, $q>3$, be lines over $k$ in $3$-subgeneral position, but not in general position.  Let $c=c(L_1,\ldots, L_q)$.  Suppose that 
\begin{align*}
\begin{cases}
|S|>1 &\text{ if $L_1,\ldots, L_q$ is of Type I or III},\\
|S|>2 &\text{ if $L_1,\ldots, L_q$ is of Type II}.
\end{cases}
\end{align*}
Then there exists a Zariski dense set of points $R\subset\mathbb{P}^2(k)$ such that
\begin{equation*}
\sum_{i=1}^q m_{L_i,S}(P)\geq (c-\epsilon)h(P)
\end{equation*}
for all $P\in R$.
\end{theorem}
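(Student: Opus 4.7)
The plan is to construct, in each of the three Types, an infinite family of $k$-rational points $P \in \mathbb{P}^2(k)$ with $\sum_{i=1}^q m_{L_i,S}(P) \geq c\,h(P) + O(1)$, and then obtain Zariski density by varying the auxiliary data (choice of triple points, 2-line intersections, or unit directions). In every case the construction uses the $S$-unit theorem to produce units with prescribed asymptotic valuations, and the bound follows from a direct local height computation.

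For Type I, fix three (not necessarily distinct) lines $L_{a_1}, L_{a_2}, L_{a_3}$ from our list passing through a common point $Q$: in subcase (b), $Q$ is any triple point; in subcase (a), with $L_i = L_j$, we take the two copies $L_i, L_j$ together with some further line $L_k$ through $Q := L_i \cap L_k$. Since $q > 4$, pick two more lines $L_b, L_c$ with $Q' := L_b \cap L_c \neq Q$ (possible because 3-subgeneral position allows at most three distinct lines through any given point). After a linear change of coordinates, $Q = (0:0:1)$ and $Q' = (1:-1:0)$. Fix distinct $v_1, v_2 \in S$; since $|S| > 1$, the $S$-unit theorem gives $u \in \mathcal{O}_{k,S}^*$ with $\|u\|_{v_1} < 1 < \|u\|_{v_2}$. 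For $P = (u : -u : 1)$ the product formula yields $h(P) = \log \|u\|_{v_2} + O(1)$; since $P$ is $v_1$-adically close to $Q$ (hence to each $L_{a_j}$) and $v_2$-adically close to $Q'$ (hence to $L_b$ and $L_c$), direct computation shows that each of $m_{L_{a_j}, S}(P)$, $m_{L_b, S}(P)$, and $m_{L_c, S}(P)$ equals $h(P) + O(1)$, giving $\sum_i m_{L_i,S}(P) \geq 5h(P) + O(1)$. Type III is handled identically, using only three lines through $Q$ and a single auxiliary line (rather than a 2-line intersection), for total $4h(P) + O(1)$. Zariski density is obtained by varying the choice of $Q'$ over many 2-line intersections in our list, and by perturbing to $P = (u : -u + \delta : 1)$ with $\delta \in \mathcal{O}_k$ in a suitable small range.

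For Type II, place the three noncollinear triple points at the coordinate vertices $Q_i = e_i$ of $\mathbb{P}^2$, and fix distinct $v_1, v_2, v_3 \in S$ (possible since $|S| > 2$). Since $\mathcal{O}_{k,S}^*$ has rank $|S|-1 \geq 2$, Dirichlet's unit theorem yields, for each large integer $N$, $S$-units $u_0, u_1, u_2 \in \mathcal{O}_{k,S}^*$ with
\begin{equation*}
\log \|u_{i-1}\|_{v_j} = \begin{cases} 2N + O(1), & j = i,\\ -N + O(1), & j \neq i,\end{cases} \qquad (i,j \in \{1,2,3\}),
\end{equation*}
and bounded log-valuations at the remaining places. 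For $P = (u_0 : u_1 : u_2) \in \mathbb{P}^2(k)$, the product formula and a direct computation give $h(P) = 6N + O(1)$ and $h_{Q_i, v_i}(P) = 3N + O(1) = \tfrac{1}{2}h(P) + O(1)$. Since at each $v_i$ the three lines through $Q_i$ each contribute $h_{Q_i, v_i}(P) + O(1)$ while the remaining lines contribute only $O(1)$, summing yields $\sum_j m_{L_j, S}(P) = 3\sum_{i=1}^3 h_{Q_i, v_i}(P) + O(1) = 27N + O(1) = \tfrac{9}{2}h(P) + O(1)$. Zariski density holds because the $(u_0 : u_1 : u_2)$-family is 2-parameter in the rank $\geq 2$ unit lattice and its image in $\mathbb{P}^2$ is not contained in any curve.

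The main obstacle is the Type II step: one must realize the symmetric ``star'' valuation pattern $(2N, -N, -N)$ and its cyclic permutations within the lattice $\log \mathcal{O}_{k,S}^* \subset \mathbb{R}^{|S|}$ — this is precisely what forces the hypothesis $|S| > 2$ — and then verify by direct computation that this symmetric choice produces exactly the ratio $h_{Q_i, v_i}(P)/h(P) = \tfrac{1}{2}$, neither more nor less. This delicate balance is what produces the sharp constant $9/2$; any asymmetric pattern gives a strictly smaller sum.
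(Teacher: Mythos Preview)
Your overall strategy coincides with the paper's: use $S$-units to manufacture points simultaneously close (at different places) to the special points of the line configuration, and read off the proximity sum. Your Type II argument is essentially the paper's, with a cosmetically different (more symmetric) choice of unit valuations; either normalization gives the ratio $\tfrac{9}{2}$.

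There is, however, a genuine gap in your Zariski density argument for Types I and III. First, varying $Q'$ ``over many 2-line intersections in our list'' yields only \emph{finitely many} lines in $\mathbb{P}^2$, hence a proper closed subset. Second, and more seriously, the perturbation $P=(u:-u+\delta:1)$ with $\delta\in\O_k$ destroys the construction: at $v_1$ (where $\|u\|_{v_1}\to 0$) the point $P$ tends to $(0:\delta:1)\neq Q=(0:0:1)$, so $h_{Q,v_1}(P)$ stays bounded and the three lines through $Q$ no longer contribute $h(P)+O(1)$ each. A nonzero $\delta\in\O_k$ cannot be made $v_1$-adically small uniformly, so there is no ``suitable small range'' that preserves the estimate while producing a Zariski dense set. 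The paper's remedy is to keep $Q$ fixed but let the \emph{auxiliary line} $L$ through $Q$ vary over all $k$-rational lines: for each such $L$ one takes $Q'$ to be $L\cap L_4$ (Type III) or the intersection of $L$ with the repeated line $L_4=L_5$ (Type I), and then takes $(Q+Q',S)$-integral points on $L$. The pencil of lines through $Q$ sweeps out $\mathbb{P}^2$, giving Zariski density for free.

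A minor point: Type I requires conditions (a) \emph{and} (b) simultaneously (a repeated line \emph{and} a triple point among distinct lines), not one or the other; the paper's proof explicitly uses both, taking the triple point for $Q$ and the repeated line to produce the double contribution at $Q'$. Your splitting into subcases (a), (b) is therefore unnecessary and slightly misleading.
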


\begin{proof}
Suppose first that $L_1,\ldots, L_q$ is of Type I.  Then after reindexing, we can assume that $L_1\cap L_2\cap L_3=\{Q\}$ is nonempty and $L_4=L_5$.  Let $L$ be a $k$-rational line through $Q$ distinct from $L_1,\ldots, L_q$.  Then $\cup_{i=1}^5L\cap L_i=\{Q,Q'\}$ consists of two points.  Since $|S|>1$, there exists an infinite set $R$ of $k$-rational $(Q+Q',S)$-integral points on $L$, i.e.,
\begin{align*}
m_{Q+Q',S}(P)=2h(P)+O(1)
\end{align*}
for all $P\in R$.  Then for all $P\in R$,
\begin{align*}
\sum_{i=1}^q m_{L_i,S}(P)\geq \sum_{i=1}^5 m_{L_i,S}(P)+O(1)=3m_{Q,S}(P)+2m_{Q',S}(P)+O(1)=5h(P)+O(1).
\end{align*}
Thus, there are infinitely many points $P\in L(k)$ satisfying
\begin{equation*}
\sum_{i=1}^q m_{L_i,S}(P)\geq (5-\epsilon)h(P).
\end{equation*}
Since the union of $k$-rational lines $L$ through $Q$ is Zariski dense in $\mathbb{P}^2$, this proves the result in the Type I case.

Suppose now that $L_1,\ldots, L_q$ is of Type III.  Since $L_1,\ldots, L_q$ are not in general position, after reindexing we can assume that $L_1\cap L_2\cap L_3=\{Q\}$ is nonempty.  Let $L$ be a $k$-rational line through $Q$ distinct from $L_1,\ldots, L_q$ and let $\{Q'\}=L\cap L_4$.  Then by the same argument as above, taking $R\subset L(k)$ to be an infinite set of $(Q+Q',S)$-integral points on $L$, for all $P\in R$ we have
\begin{align*}
\sum_{i=1}^q m_{L_i,S}(P)\geq \sum_{i=1}^4 m_{L_i,S}(P)+O(1)=3m_{Q,S}(P)+m_{Q',S}(P)+O(1)=4h(P)+O(1).
\end{align*}
Thus, there are infinitely many points $P\in L(k)$ satisfying
\begin{equation*}
\sum_{i=1}^q m_{L_i,S}(P)\geq (4-\epsilon)h(P).
\end{equation*}
Since the union of such lines $L$ is Zariski dense in $\mathbb{P}^2$, this proves the result in the Type III case.

Finally, suppose that $L_1,\ldots, L_q$ is of Type II. Let $Q_1, Q_2,$ and $Q_3$ be three noncollinear points in $\mathbb{P}^2(k)$ that are each contained in three distinct lines in $\{L_1,\ldots, L_q\}$.  After an automorphism of $\mathbb{P}^2$ we may assume that $Q_1=(1,0,0)$, $Q_2=(0,1,0)$, and $Q_3=(0,0,1)$.  Let $S=\{v_1,\ldots, v_n\}$, where by assumption $n\geq 3$.  From the proof of the Dirichlet unit theorem, for each positive integer $m$, there exist units $u_{1,m},u_{2,m}\in \O_{k,S}^*$ such that
\begin{align*}
\log \|u_{1,m}\|_{v_1}=m+O(1), \log \|u_{1,m}\|_{v_2}=O(1), \log \|u_{1,m}\|_{v_i}=-\frac{m}{n-2}+O(1), i=3,\ldots, n,\\
\log \|u_{2,m}\|_{v_1}=O(1), \log \|u_{2,m}\|_{v_2}=m+O(1), \log \|u_{2,m}\|_{v_i}=-\frac{m}{n-2}+O(1), i=3,\ldots, n.
\end{align*}
Let $P_m=(u_{1,m},u_{2,m},1)\in\mathbb{P}^2(k)$.  Let $L_x,L_y$, and $L_z$ be the three lines in $\mathbb{P}^2$ defined by $x=0$, $y=0$, and $z=0$, respectively.  Then $h(P_m)=2m+O(1)$ and
\begin{align*}
h_{L_x,v_1}(P_m)&=O(1),h_{L_x,v_2}(P_m)=m+O(1), h_{L_x,v_i}(P_m)=\frac{m}{n-2}+O(1), i=3,\ldots, n,\\
h_{L_y,v_1}(P_m)&=m+O(1),h_{L_y,v_2}(P_m)=O(1), h_{L_y,v_i}(P_m)=\frac{m}{n-2}+O(1), i=3,\ldots, n,\\
h_{L_z,v_1}(P_m)&=m+O(1),h_{L_z,v_2}(P_m)=m+O(1), h_{L_z,v_i}(P_m)=O(1), i=3,\ldots, n.
\end{align*}
For $v\in S$, we have (see \cite{Sil})
\begin{align*}
h_{Q_1,v}&=\min \{h_{L_y,v},h_{L_z,v}\}+O(1),\\
h_{Q_2,v}&=\min \{h_{L_x,v},h_{L_z,v}\}+O(1),\\
h_{Q_3,v}&=\min \{h_{L_x,v},h_{L_y,v}\}+O(1),
\end{align*}
where the functions are defined.
It follows that
\begin{align*}
h_{Q_1,v_1}(P_m)&=m+O(1), h_{Q_1,v_2}(P_m)=O(1), h_{Q_1,v_i}(P_m)=O(1), i=3,\ldots, n,\\
h_{Q_2,v_1}(P_m)&=O(1), h_{Q_2,v_2}(P_m)=m+O(1), h_{Q_2,v_i}(P_m)=O(1), i=3,\ldots, n,\\
h_{Q_3,v_1}(P_m)&=O(1), h_{Q_3,v_2}(P_m)=O(1), h_{Q_3,v_i}(P_m)=\frac{m}{n-2}+O(1), i=3,\ldots, n.
\end{align*}
Then for all $m$ such that $P_m\not\in L_1\cup \cdots \cup L_q$,
\begin{align*}
\sum_{i=1}^q m_{L_i,S}(P_m)\geq 3 \sum_{v\in S}\sum_{i=1}^3h_{Q_i,v}(P_m)=9m+O(1)=\frac{9}{2}h(P_m)+O(1).
\end{align*}

To complete the proof, it remains to show that the set $R=\{P_m\mid m\in \mathbb{N}\}$ is Zariski dense in $\mathbb{P}^2$.  Suppose that there exists a homogeneous polynomial $p\in k[x,y,z]$ that vanishes on $R$.  Looking at the valuations of $u_{1,m}^iu_{2,m}^j$ with respect to $v_1$ and $v_2$, this is plainly impossible.  Thus, we arrive at a contradiction and the set $R$ is Zariski dense in $\mathbb{P}^2$.
\end{proof}

\section{Proof of Theorem \ref{td3}}

Using the results of the previous section we now prove Theorem \ref{td3}.

\begin{proof}[Proof of Theorem \ref{td3}]
We first prove part \eqref{Wa}.  If $t>5$, then part \eqref{Wa} follows immediately from Theorem \ref{Wthm}.  Suppose now that $\frac{9}{2}<t\leq 5$.  By Wirsing's theorem, the set of points $P\in\mathbb{P}^1(\kbar)\setminus \Supp D$  satisfying $[k(P):k]\leq 2$ and
\begin{equation*}
m_{D,S}(P)\geq th(P)
\end{equation*}  
is finite, and so we may ignore such points.  Let $R$ be the set
\begin{equation*}
R=\{P\in\mathbb{P}^1(\kbar)\mid [k(P):k]=3, m_{D,S}(P)\geq th(P)\}.
\end{equation*}

Then by Lemma \ref{proxlem},
\begin{equation*}
\sum_{i=1}^{q}m_{H_{P_i,S}}(\psi(P))\geq th(\psi(P))+O(1)
\end{equation*}
for all $P\in R$.  Let $R'=\psi(R)$.  Since $t>4$, by the Schmidt Subspace Theorem, $R'$ lies in a finite union of hyperplanes of $\mathbb{P}^3$.  Let $H$ be one of the hyperplanes.

Suppose first that $H_{P_1}|_H,\ldots, H_{P_q}|_H$ is not of Type I.  Then by Theorem \ref{3subthm}, $R'\cap H$ lies in a finite union of lines (with no line contained in any of the hyperplanes $H_{P_1},\ldots, H_{P_q}$).  Let $L$ be one of these lines and let $
\sum_{i=1}^q H_{P_i}|_L=\sum_{i=1}^rc_i Q_i$, where $Q_1,\ldots, Q_r\in L(k)$ are distinct points and $c_1\geq c_2\geq \cdots \geq c_r$.  Then for all $P\in R'\cap L$,
\begin{align*}
\sum_{i=1}^r c_im_{Q_i,S}(P)\geq th(P)+O(1).
\end{align*}
If $R'\cap L$ is infinite, then by Theorem \ref{Roth}, we must have $c_1+c_2\geq t>\frac{9}{2}$.  Since $c_1$ and $c_2$ are integers and $c_1,c_2\leq 3$, we must have that $c_1=3$ and $c_2\geq 2$.  After reindexing, we can assume that $H_{P_1}\cap H_{P_2}\cap H_{P_3}\cap L=\{Q_1\}$ and $H_{P_4}\cap H_{P_5}\cap L =\{Q_2\}$.  By Lemma \ref{invlem}, $\psi^{-1}(L(k))\subset \phi_{Q_1Q_2}^{-1}(\mathbb{P}^1(k))$.  From the definitions, $\phi_{Q_1Q_2}^{-1}(0)=\{P_1,P_2,P_3\}$ and $\phi_{Q_1Q_2}^{-1}(\infty)\supset\{P_4,P_5\}$.  Thus, since $t\leq 5$, $\phi_{Q_1Q_2}\in \Phi(D,3,t,k)$ and $\psi^{-1}(L(k))\subset Z(D,3,t,k)$.  It follows that all but finitely many points of $\psi^{-1}(R'\cap H)$ are contained in $Z(D,3,t,k)$.

Suppose now that $H_{P_1}|_H,\ldots, H_{P_q}|_H$ is of Type I.  After reindexing, we can assume that $H_{P_1}\cap H_{P_2}\cap H_{P_3}\cap H=\{Q\}$, for some point $Q\in H(k)$, and $H_{P_4}\cap H=H_{P_5}\cap H$.  Let $P\in H(k)\setminus (H_{P_1}\cup H_{P_2}\cup H_{P_3})$, and let $L$ be the line through $P$ and $Q$.  Let $L\cap H_4\cap H=L\cap H_5\cap H=\{Q'\}$.  By Lemma \ref{invlem}, $\psi^{-1}(L(k))\subset \phi_{QQ'}^{-1}(\mathbb{P}^1(k))$.  From the definitions, $\phi_{QQ'}^{-1}(0)=\{P_1,P_2,P_3\}$ and $\phi_{QQ'}^{-1}(\infty)\supset\{P_4,P_5\}$.  Thus, since $t\leq 5$, $\phi_{QQ'}\in \Phi(D,3,t,k)$ and $\psi^{-1}(L(k))\subset Z(D,3,t,k)$.  Since $P\in H(k)\setminus (H_{P_1}\cup H_{P_2}\cup H_{P_3})$ was arbitrary, in particular $\psi^{-1}(R'\cap H)\subset Z(D,3,t,k)$.  Combining this fact with the previous case above, we have shown that $R\setminus Z(D,3,t,k)$ is a finite set, proving part \eqref{Wa}.

Suppose now that $4<t<\frac{9}{2}$, $|S|>2$, and $q=6$.  Let 
\begin{align*}
\{Q_1\}&=H_{P_1}\cap H_{P_2}\cap H_{P_3},\\
\{Q_2\}&=H_{P_1}\cap H_{P_4}\cap H_{P_5},\\
\{Q_3\}&=H_{P_2}\cap H_{P_4}\cap H_{P_6}.
\end{align*}
The line through $Q_1$ and $Q_2$ lies in $H_{P_1}$.   Since the hyperplanes $H_{P_i}$ are in general position, $Q_3\not\in H_{P_1}$ and it follows that $Q_1,Q_2$, and $Q_3$ are not collinear.  Let $H\subset\mathbb{P}^3$ be the unique hyperplane through $Q_1$, $Q_2$, and $Q_3$.  Since the hyperplanes $H_{P_i}$ are in general position, it follows easily that all of the lines $H_{P_i}|_H$ are distinct (otherwise there would be four hyperplanes $H_{P_i}$ containing some point $Q_j$).  Then $H_{P_1}|_H,\ldots, H_{P_6}|_H$ is of Type II.  Let $0<\epsilon<\frac{1}{4}$ be such that $t<\frac{9}{2}-\epsilon$.  By Theorem~\ref{sharp}, there exists a set of points $R'\subset H(k)$ that is Zariski dense in $H$ and such that
\begin{equation*}
\sum_{i=1}^{6}m_{H_{P_i,S}}(P)> \left(\frac{9}{2}-\epsilon\right)h(P)
\end{equation*}
for all $P\in R'$.  Let $P\in R'$ and let $\sigma((Q_1',Q_2',Q_3'))=P$.  Then by the same calculation as in the proof of Lemma \ref{proxlem}, we have
\begin{equation*}
\sum_{i=1}^6\sum_{j=1}^3 m_{P_i,S}(Q_j')> \left(\frac{9}{2}-\epsilon\right)\sum_{j=1}^3 h(Q_j')+O(1).
\end{equation*}

If $[k(Q_j'):k]\leq 2$ for some $j$ (and hence all $j$), then by Wirsing's theorem, $\sum_{i=1}^6 m_{P_i,S}(Q_j')<(4+\epsilon) h(Q_j')+O(1)$.  It follows that for all but finitely many points $P\in R'$, $P\in \im \psi$.  Let $R=\psi^{-1}(R')$.  By Lemma \ref{proxlem},
\begin{equation*}
\sum_{i=1}^6 m_{P_i,S}(P)> \left(\frac{9}{2}-\epsilon\right) h(P)+O(1)>th(P)
\end{equation*}
for all but finitely many $P\in R$.  From the definitions and the proof of Lemma \ref{invlem}, every point in $\psi(R\cap Z(D,3,t,k))$ lies on a line $L$ through points $P$ and $Q$ in $\mathbb{P}^3$, where $P$ lies in the intersection of three distinct hyperplanes $H_{P_{i_1}}, H_{P_{i_2}}, H_{P_{i_3}}$, and $Q$ lies in the intersection of two other distinct hyperplanes $H_{P_{i_4}}$ and $H_{P_{i_5}}$.  The set of such lines $L$ does not intersect $H$ in a Zariski dense set in $H$.  It follows that $R\setminus Z(D,3,t,k)$ is infinite.
\end{proof}

\bibliography{Bounded}
\end{document}